\numberwithin{equation}{section}
\newtheorem{thm}{Theorem}[section]
\newtheorem{cor}[thm]{Corollary}
\newtheorem{lem}[thm]{Lemma}
\newtheorem{prop}[thm]{Proposition}
\theoremstyle{definition}
\newtheorem{defn}[thm]{Definition}
\newtheorem*{defn*}{Definition}
\newtheorem{ex}[thm]{Example}
\newtheorem*{rmk*}{Remark}
\newcommand{\Addresses}{{
\bigskip
\footnotesize
Tianhang Lu$^\ast$: \texttt{lutianhang@stu.ouc.edu.cn}
\medskip
$\ast$: \textsc{School of Mathematical Sciences, Ocean University of China, Qingdao, China}\par\nopagebreak
}}
\title[]{Note about network decomposition by maximum flow}
\author{Tianhang Lu}
\subjclass[2020]{90C05 (primary) 90C27 and 90C35 (secondary)} 
\begin{document}

\maketitle

\section{The Maximum Flow Problems and Minimum Cut Problems} \label{Sec:flow-cut}

Given a directed network $D=(V, E)$, where $s,t \in V$ represent the source and sink of the network respectively, and $c$ represents the non-negative capacity of each edge.
If a mapping $f: E \rightarrow \mathbb{R}^+$ satisfies the following conditions:
\begin{enumerate}
\item Capacity constraint: $f(e) \leq c(e)$, for all $e \in E$;
\item Flow conservation: $\sum_{(u,v)\in E} f(u,v) = \sum_{(v,u)\in E} f(v,u)$, for all $v \in V \setminus \{s, t\}$,
\end{enumerate}
then $f$ is referred to as a \textit{flow} on $D$.
The flow value of $f$, denoted by $|f|$, is defined as the amount of flow leaving the source $s$ minus the amount of flow returning to $s$:

$$
|f| = \sum_{(s, v) \in E} f(s, v) - \sum_{(v, s) \in E} f(v, s).
$$

The maximum flow problem seeks to find the flow with the maximum value in the network.
This problem can be formulated as the following linear program:

\begin{equation}\label{LP:max-flow}
    \begin{array}{cl}
    \text{max} & \sum\limits_{v\in V} f(s, v) \\[1mm]
    \text{subject to} & \sum\limits_{v\in V} f(u, v) - \sum\limits_{v\in V} f(v, u) = 0, ~u \in V \setminus \{s, t\}, \\[1mm]
    & 0 \leq f(e) \leq c(e),~ e \in E.
    \end{array}   
\end{equation}

Given a subset $S \subseteq V$ in network $D$, the set $\delta^+(S) = \{(u,v) \in E \mid u \in S, v \in V \setminus S\}$ is called a \textit{cut} in $D$.
Similarly, we define $\delta^-(S) = \delta^+(V \setminus S)$, representing the set of edges entering $S$.
If $s \in S$ and $t \notin S$, the cut $\delta^+(S)$ is called an $s-t$ cut.
The capacity of the cut $\delta^+(S)$, denoted $c(\delta^+(S))$, is defined as the sum of capacities of all edges in $\delta^+(S)$:

$$
c(\delta^+(S)) = \sum_{e \in \delta^+(S)} c(e).
$$

The minimum cut problem seeks to find the $s-t$ cut with the minimum capacity.
The celebrated Max-Flow Min-Cut Theorem, initially proven by Lester Ford and Delbert Fulkerson \cite{Ford1956maximal}, establishes the relationship between maximum flow and minimum cut:

\begin{thm}[Max-flow min-cut theorem \cite{Ford1956maximal}]
The value of the maximum $s-t$ flow in a network equals the capacity of the minimum $s-t$ cut.
\end{thm}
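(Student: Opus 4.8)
The plan is to prove the two inequalities $\text{max-flow} \le \text{min-cut}$ and $\text{max-flow} \ge \text{min-cut}$ separately: the first by a direct ``weak duality'' estimate valid for every flow and every cut, and the second by extracting, from an optimal flow, a single cut whose capacity that flow attains.

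First I would establish the weak direction. Fix any $s$-$t$ flow $f$ and any $s$-$t$ cut $\delta^+(S)$, so $s \in S$ and $t \notin S$. Adding the flow-conservation equations over all $v \in S \setminus \{s\}$ to the defining expression for $|f|$, the interior terms cancel in pairs and one is left with the \emph{cut identity}
$$
|f| = \sum_{e \in \delta^+(S)} f(e) - \sum_{e \in \delta^-(S)} f(e).
$$
Since $0 \le f(e) \le c(e)$ for every $e$, discarding the nonnegative second sum and bounding $f(e) \le c(e)$ in the first yields $|f| \le c(\delta^+(S))$. As $f$ and $S$ are arbitrary, the maximum flow value is at most the minimum cut capacity.

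For the reverse direction I would exhibit a cut that is met with equality. The feasible region of \eqref{LP:max-flow} is a nonempty compact polytope and the objective is continuous, so a maximum flow $f^\ast$ exists; I would lean on this compactness rather than on termination of an augmenting-path procedure, which need not halt for irrational capacities. Given $f^\ast$, form the residual network in which each edge $(u,v) \in E$ contributes a forward arc of residual capacity $c(u,v) - f^\ast(u,v)$ and a backward arc of residual capacity $f^\ast(u,v)$, retaining only arcs of strictly positive residual capacity. Let $S$ be the set of vertices reachable from $s$ along residual arcs. If $t \in S$ there would be a residual $s$-$t$ path along which a positive amount of flow could be pushed, strictly increasing $|f^\ast|$ and contradicting optimality; hence $t \notin S$ and $\delta^+(S)$ is a genuine $s$-$t$ cut.

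The crux is to read off the capacity of this cut. Because no residual arc leaves $S$, every edge $(u,v) \in \delta^+(S)$ has zero forward residual capacity, i.e. $f^\ast(u,v) = c(u,v)$, while every edge $(v,u) \in \delta^-(S)$ has zero backward residual capacity, i.e. $f^\ast(v,u) = 0$. Substituting these into the cut identity collapses it to $|f^\ast| = c(\delta^+(S))$. Combined with weak duality this forces $c(\delta^+(S))$ to be a minimum cut and $|f^\ast|$ the maximum flow value, giving equality. The main obstacle I anticipate is precisely this bookkeeping in the residual construction --- matching saturated forward edges against empty backward edges so the identity telescopes exactly --- together with justifying the existence of $f^\ast$ without appealing to algorithmic termination. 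Should the combinatorial argument prove awkward, I would fall back on the fact that \eqref{LP:max-flow} has a totally unimodular constraint structure and invoke strong LP duality, identifying an integral dual optimum with the indicator vector of a minimum cut.
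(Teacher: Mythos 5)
Your proposal is correct, but it cannot be compared to a proof in the paper because the paper offers none: the theorem is stated with a citation to Ford--Fulkerson and used as a black box throughout. What you supply is the standard self-contained argument, and it is sound. The weak direction via the cut identity $|f| = f(\delta^+(S)) - f(\delta^-(S)) \le c(\delta^+(S))$ is exactly the inequality the paper itself later invokes (without proof) at the start of its proof of Theorem \ref{Thm:PQ}, so your derivation of it by summing conservation over $S \setminus \{s\}$ actually fills a small gap in the paper's later exposition. The strong direction --- existence of a maximizer by compactness of the feasible polytope of \eqref{LP:max-flow}, then taking $S$ to be the residual-reachable set from $s$ and reading off saturation on $\delta^+(S)$ and emptiness on $\delta^-(S)$ --- is the classical residual-network proof; your choice to get existence from compactness rather than from termination of an augmenting-path algorithm is the right call given that the paper allows real capacities. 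One minor point worth making explicit if you write this up: the residual network you construct is slightly finer than the one the paper defines in Section \ref{Sec:net-dec} (the paper folds forward and backward residual arcs into a single edge set $E_f$), but the reachability argument is unaffected. The LP-duality fallback via total unimodularity is also valid and would align with the strict-complementarity machinery of Section \ref{Sec:SCC}, though it is not needed.
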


The goal of this paper is to establish a decomposition of the network based on the maximum flow problem.
The remainder of this paper is organized as follows.
In Section \ref{Sec:SCC}, we introduce the strict complementarity conditions from linear programming, and then use them to obtain a partition of the edges in the network.
Section \ref{Sec:net-dec} presents a decomposition of the network based on this edge partition, which characterizes all possible maximum flows in the network.
In Section \ref{Sec:dec-cut}, we study all the minimum cuts in the network and analyze their relationship with the network decomposition.
In Section \ref{Sec:dec-pot}, we use a potential function to provide an equivalent description of minimum cuts and analyze the distribution of the potential function in the network.

\section{Strict Complementarity Conditions} \label{Sec:SCC}

Consider the following linear program:

\begin{equation} \tag{P}
\begin{array}{cl} \label{LP:P}
\text{max} & c^T x \\[1mm]
\text{subject to} & A x \leq b, \\[1mm]
& x \geq 0,
\end{array}
\end{equation}

and its dual program:

\begin{equation} \tag{D}
\begin{array}{cl}  \label{LP:D}
\text{min} & b^T y \\[1mm]
\text{subject to} & A^T y \geq c, \\[1mm]
& y \geq 0,
\end{array}
\end{equation}

where $A \in \mathbb{R}^{m \times n}$, $c, x, s \in \mathbb{R}^n$, and $b \in \mathbb{R}^m$.
There exist strict complementarity conditions:

\begin{thm}[{Strict complementarity conditions, see e.g. \cite[Section~7.9]{Sch1987theory}}] \label{Thm:scc}
For any $i=1,\ldots,n$, either of the following holds:
\begin{enumerate}
    \item There exists an optimal solution $x$ of (\ref{LP:P}) such that $x_i > 0$;
    \item There exists an optimal solution $y$ of (\ref{LP:D}) such that $A_i^T y > c_i$,
\end{enumerate}
where $A_i$ is the $i$-th row of matrix $A$.
\end{thm}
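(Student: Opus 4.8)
The plan is to fix an index $i\in\{1,\dots,n\}$ and prove the dichotomy by showing that whenever alternative (1) fails, alternative (2) must hold. Throughout I assume, as the statement implicitly does, that both (\ref{LP:P}) and (\ref{LP:D}) are solvable, so that strong LP duality supplies a common optimal value $\gamma=c^Tx=b^Ty$ together with at least one optimal dual solution $y^\ast$ (thus $A^Ty^\ast\ge c$, $y^\ast\ge 0$, $b^Ty^\ast=\gamma$). Note first that (1) and (2) cannot hold for the \emph{same} pair: if some optimal $x$ had $x_i>0$, complementary slackness would force $(A^Ty)_i=c_i$ for every optimal $y$. Hence it suffices to show that the failure of (1) forces (2).

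Assume then that $x_i=0$ for every optimal solution $x$ of (\ref{LP:P}). The optimal primal set is exactly the polyhedron $P^\ast=\{x\ge 0: Ax\le b,\ c^Tx\ge\gamma\}$, since feasibility already forces $c^Tx\le\gamma$. I would then study the auxiliary program that maximizes the single coordinate over this optimal face,
\[
\max\ e_i^Tx \quad\text{s.t.}\quad Ax\le b,\ -c^Tx\le-\gamma,\ x\ge 0,
\]
where $e_i$ is the $i$-th unit vector. This program is feasible (any optimal $x$ works) and, by our assumption, has optimal value $0$. Dualizing, with multipliers $y\ge 0$ for $Ax\le b$ and $\mu\ge 0$ for $-c^Tx\le-\gamma$, strong duality produces $\bar y\ge 0$ and $\bar\mu\ge 0$ with
\[
A^T\bar y-\bar\mu\,c\ \ge\ e_i,\qquad b^T\bar y=\gamma\,\bar\mu .
\]

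It remains to convert $(\bar y,\bar\mu)$ into an optimal solution of (\ref{LP:D}) with strict slack in the $i$-th constraint, i.e. $(A^Ty)_i>c_i$, which is condition (2); I expect this rescaling to be the crux. If $\bar\mu=0$, then $A^T\bar y\ge e_i\ge 0$ and $b^T\bar y=0$, so $y:=y^\ast+\bar y$ is nonnegative, satisfies $A^Ty\ge c+e_i\ge c$ with $(A^Ty)_i\ge c_i+1$, and has $b^Ty=\gamma$; hence $y$ is dual-optimal and certifies (2). If $\bar\mu>0$, then $y:=\bar y/\bar\mu$ is nonnegative, satisfies $A^Ty\ge c+e_i/\bar\mu\ge c$ with $(A^Ty)_i\ge c_i+1/\bar\mu$, and has $b^Ty=\gamma$; again $y$ is dual-optimal and certifies (2). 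Either way alternative (2) holds.

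The main obstacle, as flagged, is isolating the right certificate: one must realize the optimal set as the face cut out by $c^Tx\ge\gamma$, form the coordinate-maximizing auxiliary program, and dualize it so that the slack $e_i$ lands precisely in the $i$-th dual constraint; the case split on $\bar\mu$ together with the scaling $1/\bar\mu$ is exactly what makes the constructed $y$ simultaneously feasible, optimal, and strictly slack at $i$. (Running this argument for every $i$ and averaging the resulting dual-optimal solutions — and likewise on the primal side — would in fact yield a single strictly complementary pair, though the stated theorem only requires the per-index version.)
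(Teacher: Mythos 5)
Your proof is correct and complete. Note, however, that the paper does not prove this theorem at all: it is stated as a known result with a pointer to Schrijver, so there is no in-paper argument to compare against. What you have reconstructed is precisely the classical Goldman--Tucker argument that Schrijver gives: identify the primal optimal face as $\{x\ge 0: Ax\le b,\ c^Tx\ge\gamma\}$, maximize $x_i$ over that face, observe the value is $0$ when alternative (1) fails, and dualize to extract multipliers $(\bar y,\bar\mu)$ with $A^T\bar y-\bar\mu c\ge e_i$ and $b^T\bar y=\gamma\bar\mu$. Your case split is handled correctly: for $\bar\mu>0$ the rescaling $\bar y/\bar\mu$ is dual-feasible with objective $\gamma$, hence optimal, and strictly slack at $i$; for $\bar\mu=0$ the ray $\bar y$ has $b^T\bar y=0$ and $A^T\bar y\ge e_i\ge 0$, so adding it to any dual optimum $y^\ast$ preserves feasibility and the objective value while creating strict slack at $i$. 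The one hypothesis you rightly make explicit --- that both programs are solvable so that $\gamma$ and $y^\ast$ exist --- is indeed implicit in the paper's statement (and is automatic for the max-flow instance the paper applies it to). Your closing remark about mutual exclusivity via complementary slackness, and about averaging over $i$ to get a single strictly complementary pair, is also accurate, though not needed for the statement as given.
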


Let $[n] = \{1, \ldots, n\}$ represent the index set of variables, which can be partitioned into two parts: $(N_1, N_2)$, where:

$$
\begin{aligned}
N_1 = \{i \in [n] \mid x_i > 0\}, \\
N_2 = \{i \in [n] \mid A^T_i y > 0\}.
\end{aligned}
$$

This partition is called the \textit{optimal partition} of the variables $x$.
Similarly, the index set $[m] = \{1, \ldots, m\}$ of the dual variables can be partitioned into two parts: $(M_1, M_2)$, where:

$$
\begin{aligned}
M_1 = \{j \in [m] \mid A_j x < b_j\}, \\
M_2 = \{j \in [m] \mid y_j > 0\}.
\end{aligned}
$$

For the maximum flow problem, the dual linear program to (\ref{LP:max-flow}) is:

\begin{equation} \label{LP:min-cut}
\begin{array}{cl}
\text{min} & \sum\limits_{e \in E} c(e) y(e) \\[1mm]
\text{subject to} & \pi(v) + y(s,v) \geq 1, ~(s,v) \in E, \\[1mm]
& \pi(v) - \pi(u) + y(u,v) \geq 0,~(u,v) \in E,~u \neq s,~v \neq t, \\[1mm]
& y(u,t) \geq \pi(u),~(u,t) \in E, \\
& y(e) \geq 0,~e \in E.
\end{array}
\end{equation}

By defining $\pi(s) = 1$ and $\pi(t) = 0$, this dual program can be rewritten as:

\begin{equation} \label{LP:re-min-cut}
\begin{array}{cl}
\text{min} & \sum\limits_{e \in E} c(e) y(e) \\[1mm]
\text{subject to} & \pi(v) - \pi(u) + y(u,v) \geq 0,~(u,v) \in E, \\[1mm]
& \pi(s) = 1, \\
& \pi(t) = 0, \\
& y(e) \geq 0,~e \in E.
\end{array}
\end{equation}

From the optimal partition of the variables $f$ and $y$, we can obtain a partition of the edges in $D$:

\begin{defn}  \label{Defn:edge}
Given a network $D$, an edge $e \in E$ is classified as follows:
\begin{enumerate}
    \item If $f(e) = c(e)$ for all maximum flows $f$, then $e$ is called an \textit{essential} edge.
Otherwise, it is called a \textit{dummy} edge;
    \item If there exists a maximum flow $f$ such that $0 < f(e) < c(e)$, then $e$ is called a \textit{dummy I} edge;
    \item If $f(e) = 0$ for all maximum flows $f$, then $e$ is called a \textit{dummy II} edge.
\end{enumerate}
\end{defn}

Let $A, C, R$ represent the sets of essential edges, dummy I edges, and dummy II edges, respectively.

\section{Decomposition of the Network by Maximum Flow} \label{Sec:net-dec}

Given that $f$ is the maximum flow in $D$, edges in $D$ that have the same direction as the flow $f$ are called the \textit{forward edges} of $f$, while those in the opposite direction are called \textit{backward edges}.
The residual network of $f$ is defined as the subgraph consisting of forward edges that are not saturated and backward edges that are not empty.
More formally, we define the residual network of $f$ as $D_f = (V, E_f)$, where $E_f = \{(u, v) \in V \times V \mid c(u, v) - f(u, v) > 0\}$.

\begin{defn} \label{Defn:SCC}
Given a network $D$, a maximal subgraph in which there is a directed path between every pair of vertices is called a \textit{strongly connected component} of $D$.
\end{defn}

Let $\mathcal{B}_f$ represent the set of vertex sets of strongly connected components in the residual network $D_f$.
For any $B \in \mathcal{B}_f$, the subgraph $D_f[B]$ induced by the vertices in $B$ is a strongly connected component.
We will prove that $\mathcal{B}_f$ is independent of the choice of $f$.

\begin{lem}  \label{Defn:B}
If $f_1$ and $f_2$ are both maximum flows in $D$, then $\mathcal{B}_{f_1} = \mathcal{B}_{f_2}$.
\end{lem}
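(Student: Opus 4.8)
The plan is to show that the residual networks $D_{f_1}$ and $D_{f_2}$ induce exactly the same reachability relation on $V$: for any $u,v$ there is a directed $u$-$v$ path in $D_{f_1}$ if and only if there is one in $D_{f_2}$. Once this is established, two vertices lie in a common strongly connected component of $D_{f_1}$ precisely when they are mutually reachable in $D_{f_1}$, which by the equivalence of reachability relations is the same as being mutually reachable in $D_{f_2}$; hence the partitions of $V$ into strongly connected components coincide and $\mathcal{B}_{f_1} = \mathcal{B}_{f_2}$. By symmetry it suffices to prove one inclusion, so I will show that every arc of $D_{f_2}$ is ``simulated'' by a directed path in $D_{f_1}$; concatenating these simulations turns any directed path in $D_{f_2}$ into a directed walk in $D_{f_1}$ with the same endpoints.

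The central object is the difference $g = f_2 - f_1$, viewed as a skew-symmetric function on ordered pairs of vertices (extending capacities by $c(u,v)=0$ on non-arcs and flows by $f(v,u) = -f(u,v)$). Since $|f_1| = |f_2|$, the net value of $g$ out of every vertex, \emph{including} $s$ and $t$, is zero, so $g$ is a circulation. A standard decomposition writes $g$ as a sum of flows around finitely many directed cycles, each using only arcs $(a,b)$ with $g(a,b) > 0$. The key observation is that every such arc lies in $E_{f_1}$: from $g(a,b) > 0$ we get $f_1(a,b) < f_2(a,b) \le c(a,b)$, so the residual capacity $c(a,b) - f_1(a,b)$ is positive. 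Consequently each cycle in the decomposition is a directed cycle of $D_{f_1}$.

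Now take any arc $(u,v) \in E_{f_2}$. If $(u,v) \in E_{f_1}$ as well, the single arc is the desired path. Otherwise $c(u,v) - f_1(u,v) = 0$ while $c(u,v) - f_2(u,v) > 0$, which forces $f_2(u,v) < f_1(u,v)$, that is $g(v,u) > 0$. Hence the reverse arc $(v,u)$ appears in the cycle decomposition, lying on a directed cycle $\gamma \subseteq D_{f_1}$; deleting $(v,u)$ from $\gamma$ leaves a directed $u$-$v$ path inside $D_{f_1}$. This establishes the simulation claim, hence the inclusion of reachability relations, and by symmetry the reverse inclusion.

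I expect the main technical care to be in the skew-symmetric bookkeeping: reading off membership in $E_{f_1}$ and $E_{f_2}$ from the signs of $c - f_i$, treating forward and backward residual arcs uniformly, and (if antiparallel arcs are permitted) confirming that the circulation decomposition still places every positively weighted arc in $E_{f_1}$. The conceptual core — that $g$ is a circulation whose decomposition cycles live entirely in $D_{f_1}$ — is short; the only place an error could creep in is matching the orientation of the decomposition cycle to the arc $(u,v)$ being simulated, which the computation $g(v,u) > 0$ pins down precisely.
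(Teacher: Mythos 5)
Your proof is correct, and it rests on the same key fact as the paper's: the difference $f_2-f_1$ is a circulation that decomposes into directed cycles all of whose arcs lie in $E_{f_1}$. Where you diverge is in how that fact is deployed. The paper reduces ``without loss of generality'' to the case of a single cycle augmentation $f_2 = f_1 + \delta 1_K$ and then asserts that augmenting along $K$ preserves the strongly connected components; this is an implicit induction over the cycles of the decomposition, and it leaves unverified both the inductive bookkeeping (that each successive cycle still lives in the current residual graph, which requires re-decomposing at each step) and the claim that an augmentation neither splits the component containing $K$ nor merges other components. You instead prove directly that the reachability relations of $D_{f_1}$ and $D_{f_2}$ coincide, by simulating each arc $(u,v)\in E_{f_2}$ with a path in $D_{f_1}$: either the arc is already present, or $g(v,u)>0$ forces $(v,u)$ onto a decomposition cycle inside $D_{f_1}$ whose remainder is a $u$--$v$ path. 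This buys you a non-inductive, symmetric argument that proves the stronger statement that the two residual graphs have identical transitive closures (not merely identical SCC partitions), and it sidesteps the delicate component-tracking entirely. The skew-symmetric sign-checking you flag as the main risk is handled correctly: $g(a,b)>0$ gives $f_1(a,b)<f_2(a,b)\le c(a,b)$, and $(u,v)\in E_{f_2}\setminus E_{f_1}$ gives $f_2(u,v)<c(u,v)\le f_1(u,v)$, hence $g(v,u)>0$, which orients the cycle the right way. Your writeup is, if anything, more complete than the paper's.
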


\begin{proof}
We only need to show that all strongly connected components in $D_{f_1}$ correspond to those in $D_{f_2}$, differing only in edge direction.
Since $f_2 - f_1$ forms a circulation in $D_{f_1}$, without loss of generality, assume $f_2 = f_1 + \delta 1_K$, where $K$ is a directed cycle in $D_{f_1}$ and $\delta > 0$.
In this case, the strongly connected component containing $K$ remains strongly connected in $D_{f_2}$, and all other strongly connected components remain the same as in $D_{f_1}$.
\end{proof}

Thus, we can denote $\mathcal{B}_f$ simply as $\mathcal{B}$.
Let $\alpha(e)$ represent the set of endpoints of edge $e$, we have the following theorem:

\begin{thm} \label{Thm:net-dec}
Given $f$ as the maximum flow in $D$, we have:
\begin{enumerate}
\item $A = \{e \in E \mid f(e) > 0, \alpha(e) \setminus B \neq \emptyset, \forall B \in \mathcal{B}\}$;
\item $C = \{e \in E \mid \exists B \in \mathcal{B}, \alpha(e) \subseteq B\}$;
\item $R = \{e \in E \mid f(e) = 0, \alpha(e) \setminus B \neq \emptyset, \forall B \in \mathcal{B}\}$.
\end{enumerate}
\end{thm}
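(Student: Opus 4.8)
The plan is to reduce everything to part (2) together with a single dichotomy for the remaining edges. First I would record that the three classes partition the edges of positive capacity: the set of maximum flows is the optimal face of (\ref{LP:max-flow}), a convex polytope, so for a fixed $e$ the attainable values $\{f(e) : f \text{ maximum}\}$ form an interval $[\ell_e, u_e]$ with $0 \le \ell_e \le u_e \le c(e)$. Then $e$ is essential exactly when $\ell_e = c(e)$, dummy~II exactly when $u_e = 0$, and dummy~I in every remaining case; these are mutually exclusive and exhaustive once $c(e) > 0$. (The same partition follows from Theorem~\ref{Thm:scc} applied to the capacity constraints and the sign constraints of (\ref{LP:max-flow}).) It is also convenient to translate the set conditions: for $e = (u,v)$, the clause ``$\alpha(e) \setminus B \neq \emptyset$ for every $B \in \mathcal{B}$'' says that $u$ and $v$ lie in \emph{distinct} strongly connected components of the residual network, while ``$\exists B \in \mathcal{B}$ with $\alpha(e) \subseteq B$'' says they lie in a \emph{common} component. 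By Lemma~\ref{Defn:B} this notion does not depend on the chosen maximum flow.

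Next I would prove part (2), that $e = (u,v)$ is dummy~I if and only if its endpoints share a component. For the forward implication, take a maximum flow $f$ with $0 < f(e) < c(e)$; then $c(u,v) - f(u,v) > 0$ and $f(u,v) > 0$, so both the forward arc $(u,v)$ and the reverse arc $(v,u)$ lie in $D_f$, and $u, v$ are mutually reachable, hence share a component of $\mathcal{B}$. For the converse, suppose $u, v$ lie in a common $B \in \mathcal{B}$, realised in $D_f$ for some maximum flow $f$, and split into cases on $f(e)$. If $f(e) = 0$ the forward arc $(u,v)$ lies in $D_f$, and concatenating it with a directed $v$--$u$ path inside $D_f[B]$ gives a directed cycle through $e$; pushing $\delta > 0$ units around it, with $\delta$ below the least residual capacity on the cycle, yields a maximum flow with $f(e) = \delta \in (0, c(e))$. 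If $f(e) = c(e)$ the reverse arc $(v,u)$ lies in $D_f$, and the analogous cycle, concatenating a $u$--$v$ path with $(v,u)$, lowers $f(e)$ into $(0, c(e))$ after augmentation. If $0 < f(e) < c(e)$ already there is nothing to do. In each case the augmentation runs around a directed cycle of $D_f$, i.e.\ a circulation, so the resulting flow has unchanged value and is again maximum.

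I would then treat the edges whose endpoints lie in distinct components, which must fall into $A$ or $R$. Fix such an $e = (u,v)$ and two maximum flows $f_1, f_2$. As in the proof of Lemma~\ref{Defn:B}, $f_2 - f_1$ is a circulation supported on arcs of $D_{f_1}$ (increases of flow use forward arcs, decreases use reverse arcs) and decomposes into directed cycles of $D_{f_1}$, each confined to a single component. Hence no cycle uses $(u,v)$ or $(v,u)$, so $f_2(e) = f_1(e)$: the value $f(e)$ is the same for all maximum flows. Moreover $f(e) \notin (0, c(e))$, since otherwise both residual arcs would be present and $u, v$ would share a component; thus $f(e) \in \{0, c(e)\}$. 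If $f(e) = c(e) > 0$ the edge is essential, matching the description of $A$ in (1); if $f(e) = 0$ it is dummy~II, matching $R$ in (3). Combined with the characterisation of $C$ from part (2), this yields all three identities.

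The main obstacle is the converse in part (2): converting the purely topological hypothesis ``$u$ and $v$ share a component'' into an explicit maximum flow with $0 < f(e) < c(e)$. The delicate points are locating a residual cycle \emph{through} $e$ in each of the boundary cases $f(e) = 0$ and $f(e) = c(e)$, and checking that a sufficiently small augmentation keeps the flow within the capacity box while preserving optimality; this in turn relies on reading $E_f$ skew-symmetrically, so that a saturated original arc contributes its reverse as a residual arc. The circulation-decomposition step of the third paragraph is the other place needing care, though it only reuses the mechanism already invoked in Lemma~\ref{Defn:B}.
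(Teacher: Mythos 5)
Your proposal is correct and takes essentially the same route as the paper: part (2) is established by the identical case analysis (both residual arcs of $e$ present when $0 < f(e) < c(e)$, and augmentation around a residual cycle through $e$ in the boundary cases $f(e) = 0$ and $f(e) = c(e)$). The only difference is that you spell out, via the circulation decomposition into cycles confined to single components, why parts (1) and (3) follow from part (2) --- a step the paper leaves implicit with ``other statements will naturally follow.''
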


\begin{proof}
We only need to prove the second statement, after which other statements will naturally follow.
Let $e = (u, v) \in E$, and suppose there exists $B \in \mathcal{B}$ such that $\alpha(e) \subseteq B$.

If $e \in E_f$, it follows that $f(e) < c(e)$.
Since $D_f[B]$ is strongly connected, there exists a directed path from $v$ to $u$ in $D_f$.
By augmenting the flow along this cycle formed by the path and $e$, we obtain a new maximum flow $f'$, which satisfies $f'(e) < c(e)$.
Since $f'(e) > f(e) \geq 0$, it follows that $e \in C$.

If $e \notin E_f$, it follows that $f(e) = c(e)$.
Again, since $D_f[B]$ is strongly connected, there exists a directed path from $u$ to $v$ in $D_f[B]$.
Augmenting the flow along the cycle formed by this path and $(v, u)$ yields a new maximum flow $f'$ that satisfies $f'(e)>0$.
Since $f'(e)<f(e)=c(e)$, it follows that $e \in C$.

Conversely, suppose $e \in C$.
By definition, there exists a maximum flow $f'$ in $D$ such that $0 < f'(e) < c(e)$.
This implies that $u$ and $v$ are in the same strongly connected component of $D_{f'}$, meaning there exists a $B \in \mathcal{B}$ such that $\alpha(e) \subseteq B$.
\end{proof}

From this theorem, we can infer that dummy I edges in the network form strongly connected components in $D_f$.
The remaining edges consist of essential edges and dummy II edges.
Essential edges are necessary components of the maximum flow, while dummy II edges play various roles in the network.

In addition to classifying the edges in network $D$, the vertex sets in $\mathcal{B}$ can also be categorized based on their relationship with the maximum flow:

\begin{defn} \label{Defn:vertex-set}
Given $B \in \mathcal{B}$, we classify $B$ as follows:
\begin{enumerate}
    \item If $s \in B$, then $B$ is called a \textit{start vertex set};
    \item If $t \in B$, then $B$ is called an \textit{end vertex set};
    \item If $s, t \notin B$ and the number of essential edges entering to $B$ is greater than one, $B$ is called a \textit{transfer vertex set};
    \item If $s, t \notin B$ and exactly one essential edge enters to $B$, $B$ is called a \textit{direct vertex set};
    \item If $s, t \notin B$ and no essential edges enters to $B$, $B$ is called a \textit{removable vertex set}.
\end{enumerate}
\end{defn}

\begin{ex} \label{Ex:net}

In the following network, the capacities of all edges are equal to one.
The colors red, black, and green represent the essential edges, the dummy I edges, and the dummy II edges, respectively.
The corresponding types of vertex sets are labeled in the figure \ref{Fig:net}.

\begin{figure}[h]
    \centering
    \includegraphics[width=0.6\textwidth]{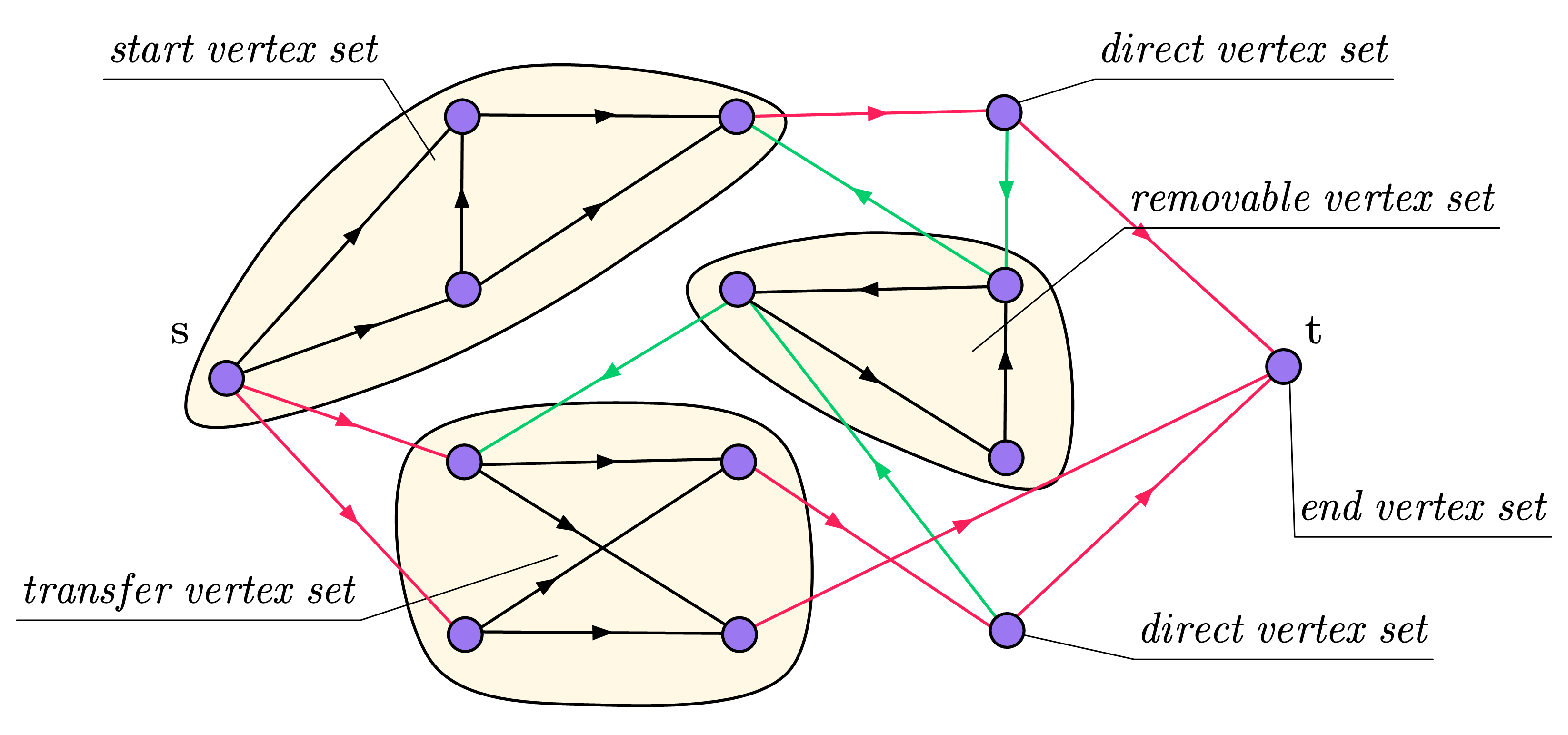} 
    \caption{Three types of edges and five types of vertex sets in the network.}
    \label{Fig:net}
\end{figure}
\end{ex}

We conclude this section with the following simple properties.

\begin{prop} \label{Prop:remove}
Given $B \in \mathcal{B}$, the following statements are equivalent:
\begin{enumerate}
\item $B$ is a removable vertex set;
\item The edges in $D[B]$ can only form circulations in the maximum flow.
\end{enumerate}
\end{prop}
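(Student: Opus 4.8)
The plan is to route both implications through one structural fact that is immediate from Theorem~\ref{Thm:net-dec}: every boundary edge of $B$, i.e.\ every edge in $\delta^-(B)\cup\delta^+(B)$, is either essential or dummy~II, and never dummy~I. Indeed, a dummy~I edge has both endpoints inside a single member of $\mathcal{B}$ (the set $C$ in Theorem~\ref{Thm:net-dec}), while a boundary edge has exactly one endpoint in $B$; since distinct members of $\mathcal{B}$ are vertex-disjoint, such an edge cannot lie inside any one component. I will also record the balance identity obtained by summing the conservation law over all $v\in B$: because $s,t\notin B$ every vertex of $B$ is conservative, and the internal edges cancel, leaving $\sum_{e\in\delta^-(B)}f(e)=\sum_{e\in\delta^+(B)}f(e)$ for every maximum flow $f$. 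Throughout I read statement~(2) as the assertion that in every maximum flow $f$ no flow crosses the boundary of $B$, so that the restriction of $f$ to $D[B]$ is a circulation.

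For (1)$\Rightarrow$(2) I would start from removability. If $B$ is removable then $\delta^-(B)$ carries no essential edge, so by the structural fact every edge entering $B$ is dummy~II and thus has $f(e)=0$ in each maximum flow. The left-hand side of the balance identity is then $0$, and since the right-hand side is a sum of nonnegative terms, $f$ also vanishes on $\delta^+(B)$. With the whole boundary carrying no flow, the conservation law at each single vertex $v\in B$ reduces to a balance among internal edges only, which is exactly the statement that $f|_{D[B]}$ is a circulation; hence (2) holds.

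For (2)$\Rightarrow$(1) I would argue by contraposition. Suppose $B$ is not removable, so some essential edge $e=(u,v)$ enters $B$. Being essential, $e$ is saturated in every maximum flow, so $f(e)=c(e)$; assuming, as is standard (zero-capacity edges may be deleted without altering any maximum flow), that $c(e)>0$, a positive amount of flow enters $B$ at $v$, and by the balance identity an equal positive amount leaves through $\delta^+(B)$. Thus flow genuinely crosses the boundary of $B$, so (2) fails.

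The step I expect to be the real obstacle is interpretational rather than computational: pinning down statement~(2) so that the equivalence is literally true. The delicate point is that flow can transit $B$ through a single boundary vertex without ever using an internal edge---for instance when an essential edge enters and an essential edge leaves the same vertex while an independent circulation occupies the remaining internal edges, a configuration that is genuinely realizable as a strongly connected component---in which case the internal edges still, word for word, ``only form circulations'' even though $B$ is a direct vertex set rather than a removable one. This is why I phrase (2) as the vanishing of $f$ on $\delta^-(B)\cup\delta^+(B)$ (equivalently, $f|_{D[B]}$ a circulation with no transit), which is the reading under which the argument above is correct.
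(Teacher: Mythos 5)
The paper offers no proof of this proposition (it is stated with ``Proof omitted''), so there is nothing to compare your argument against; taken on its own, your proof is essentially correct and supplies the missing content. The two ingredients you isolate are exactly the right ones: the observation that boundary edges of $B$ are never dummy~I (since members of $\mathcal{B}$ are vertex-disjoint and Theorem~\ref{Thm:net-dec} places every dummy~I edge inside a single member), and the balance identity $\sum_{e\in\delta^-(B)}f(e)=\sum_{e\in\delta^+(B)}f(e)$ obtained by summing conservation over $B$. Your interpretational caveat is not pedantry but a genuine necessity: a singleton $B=\{v\}$ lying on an essential path has $D[B]$ edgeless, so statement~(2) read literally is vacuously true while $B$ is a direct vertex set, and the equivalence fails as written. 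Reading~(2) as ``$f$ vanishes on $\delta^-(B)\cup\delta^+(B)$ for every maximum flow'' is the formulation under which the proposition is actually true, and flagging this is arguably the most valuable part of your write-up.

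Two loose ends are worth tightening. First, your contrapositive for (2)$\Rightarrow$(1) only treats the failure mode ``some essential edge enters $B$,'' but Definition~\ref{Defn:vertex-set} makes $s,t\notin B$ part of being removable, so you should also dispose of the cases $s\in B$ and $t\in B$; when the maximum flow value is positive this is immediate (the net flow out of $B$ equals $\pm|f|\neq 0$, so the boundary carries flow), and when $|f|=0$ the degenerate case is absorbed by your standing convention that zero-capacity edges are deleted. Second, that same convention is doing real work in ruling out capacity-zero essential edges, so it deserves to be stated as a hypothesis rather than an aside.
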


\begin{proof}
Proof omitted.
\end{proof}

When considering only the structure of paths in the network, the removable vertex sets can be iteratively eliminated to simplify the network.
However, in other contexts, such simplification may destroy the original connectivity structure of the network, hence the removable vertex sets are not truly “removable.”

Next, we assume that any dummy II edge lies on some $s$-$t$ path in $D$ (otherwise, it can be removed directly).
We can prove the following simple property.

\begin{prop} \label{Prop:start-end}
Let $e=(u,v)\in E$, the following facts holds:
\begin{enumerate}
\item If $e \in R$, then $u$ must not belong to the start vertex set, and $v$ must not belong to the end vertex set;
\item If $e \in A$, then $v$ must not belong to the start vertex set, and $u$ must not belong to the end vertex set.
\end{enumerate}

\end{prop}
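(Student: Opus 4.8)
The plan is to first reduce the four assertions to two, then dispatch the essential case by flow decomposition and the dummy~II case by a closed-reachable-set argument; the latter is where the hypothesis that $e$ lies on an $s$--$t$ path gets used, and it is the main obstacle. Throughout, write $B_s, B_t \in \mathcal{B}$ for the start and end vertex sets, fix a maximum flow $f$, and write $x \rightsquigarrow y$ for a directed path from $x$ to $y$ in the residual network $D_f$.

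Reduction. Since essential edges and dummy~II edges both have their two endpoints in distinct members of $\mathcal{B}$ (Theorem \ref{Thm:net-dec}), saying $v \in B_s$ for an essential $e=(u,v)$ is the same as saying $e$ \emph{enters} $B_s$, and saying $u \in B_s$ for a dummy~II $e=(u,v)$ is the same as saying $e$ \emph{leaves} $B_s$. So the four claims amount to: (i) no essential edge enters $B_s$, (ii) no dummy~II edge leaves $B_s$, together with their $B_t$-analogues. I would get the $B_t$-analogues for free from the reversal symmetry $D \mapsto D^{R}$ that reverses every edge and interchanges $s$ and $t$: a maximum flow reverses to a maximum flow, the residual network reverses so $\mathcal{B}$ is unchanged as a family of vertex sets, the start set of $D^{R}$ is $B_t$, the essential and dummy~II classes are preserved, and an edge $(u,v)$ becomes $(v,u)$. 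Hence it suffices to prove (i) and (ii).

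Essential case (i). Suppose an essential $e=(u,v)$ enters $B_s$, so $v \in B_s$, $u \notin B_s$, and $f(e)=c(e)>0$ with the backward arc $(v,u) \in E_f$. Decompose $f$ into $s$--$t$ paths and cycles. The edge $e$ cannot lie on a cycle of the decomposition: a flow cycle through $e$ would give a positive-flow path from $v$ to $u$, whose reversal is a residual path $u \rightsquigarrow v$, and together with $(v,u) \in E_f$ this would place $u,v$ in one strongly connected component, contradicting that $e$ is essential. So $e$ lies on an $s$--$t$ path $s \rightsquigarrow u \to v \rightsquigarrow t$; its initial segment carries positive flow, so reversing it yields a residual path $u \rightsquigarrow s$. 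Combining $u \rightsquigarrow s$, the path $s \rightsquigarrow v$ (available since $v \in B_s$), and the arc $v \to u$ produces a residual cycle through $u$ and $s$, forcing $u \in B_s$ — a contradiction.

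Dummy~II case (ii), the crux. Suppose a dummy~II edge $e=(u,v)$ leaves $B_s$, so $u \in B_s$, $v \notin B_s$, and $f(e)=0$, whence $(u,v) \in E_f$. Let $W$ be the set of vertices reachable from $v$ in $D_f$. First I would show $s,t \notin W$: if $v \rightsquigarrow s$ then, using $s \rightsquigarrow u$ (as $u \in B_s$), we get $v \rightsquigarrow u$, which with $(u,v) \in E_f$ merges $u,v$ into one component, contradicting $e \in R$; and if $v \rightsquigarrow t$ then $s \rightsquigarrow u \to v \rightsquigarrow t$ is an augmenting path, contradicting maximality of $f$. Since $W$ is closed under residual reachability, no residual arc leaves $W$; translating back to $f$, every original edge leaving $W$ is saturated and every original edge entering $W$ is empty. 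As $W$ contains neither $s$ nor $t$, summing flow conservation over $W$ shows the net flow out of $W$ is zero, whence the total capacity of edges leaving $W$ vanishes. But $v \in W$ and $t \notin W$, and by hypothesis $e$ lies on an $s$--$t$ path, so there is a genuine positive-capacity directed path from $v$ to $t$ in $D$; that path must leave $W$ across a positive-capacity edge, a contradiction.

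The main obstacle is precisely case (ii): unlike the essential case, one cannot reach a contradiction by rerouting flow through $e$, because any circulation through $e$ in $D_f$ would itself already merge $u$ and $v$; this forces one to invoke the $s$--$t$-path hypothesis, and the clean way I found to exploit it is via the forward-closed set $W$ and a conservation/capacity count. I would also verify the reversal-symmetry reduction with care (the behaviour of capacities and of the flow value under edge reversal, and that $B_s$ and $B_t$ genuinely swap roles), since the whole proposition is being deduced from its source-side half.
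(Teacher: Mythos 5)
Your argument is correct, and it takes a genuinely different --- and far more complete --- route than the paper's. The paper disposes of the whole proposition in two sentences: it treats only the claim that a dummy II edge $e=(u,v)$ cannot have $u$ in the start vertex set $B_s$, asserts that otherwise ``$u,v$ must belong to the start vertex set,'' and then cites Theorem \ref{Thm:net-dec} to force $e\in C$. The asserted step is precisely the one you identify as the crux: from $u\in B_s$ and $(u,v)\in E_f$ one only learns that $v$ is reachable \emph{from} $s$ in $D_f$, not that $v$ can reach $s$, so $v\in B_s$ does not follow; moreover the paper's sketch never invokes the standing hypothesis that dummy II edges lie on $s$--$t$ paths, even though the claim fails without it (a zero-flow edge from $s$ into a dead-end vertex is a counterexample). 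Your forward-closed set $W$ together with the conservation/capacity count supplies exactly the missing argument and pinpoints where that hypothesis enters; your flow-decomposition argument likewise fills in the unstated reason why, in the essential case, both endpoints would land in $B_s$; and your reversal symmetry cleanly handles the two end-vertex-set claims the paper leaves entirely to the reader. What the paper's template buys is brevity and a uniform slogan (``both endpoints fall into one component, hence $e\in C$''); what yours buys is an actual proof, at the modest cost of the flow decomposition theorem and a routine verification that reversal preserves maximum flows, residual components, and the edge classes. Two small points to make explicit: both cases tacitly assume capacities are strictly positive (otherwise $f(e)=c(e)$ and $f(e)=0$ can hold simultaneously and the classes of Definition \ref{Defn:edge} degenerate), and in the dummy II case the portion of the $s$--$t$ path from $v$ to $t$ should be taken along positive-capacity edges so that it genuinely contradicts $c(\delta^+(W))=0$.
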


\begin{proof}
We only prove that $u$ must not belong to the start vertex set in the first statement.
Otherwise, $u,v$ must belong to the start vertex set.
According to Theorem \ref{Thm:net-dec}, $e \in C$, which is a contradiction.
\end{proof}

\section{Decomposition of the Network and Minimum Cut} \label{Sec:dec-cut}

In this section, we study the distribution of minimum cuts in networks.
Picard and Queyranne \cite{Picard1980structure} provided a construction for any minimum cut in the network.

\begin{thm}[Picard-Queyranne \cite{Picard1980structure}] \label{Thm:PQ}
Let $f$ be the maximum flow of $D$.
Then the $s-t$ cut $\delta^+(S)$ is a minimum cut in $D$ if and only if for any $u \in S$, if $(u,v) \in E_f$, then $v \in S$.
\end{thm}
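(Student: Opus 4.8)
The plan is to combine the Max-Flow Min-Cut Theorem with the classical flow-across-a-cut identity, and then translate the resulting equality condition into the residual-network language. First I would establish, for any $s$-$t$ cut $\delta^+(S)$ with $s \in S$ and $t \notin S$, the identity
$$|f| = \sum_{e \in \delta^+(S)} f(e) - \sum_{e \in \delta^-(S)} f(e),$$
which follows by summing the flow-conservation constraints over all $v \in S \setminus \{s\}$ and adding the defining equation for $|f|$ at the source. The contributions of edges internal to $S$ telescope and cancel, leaving only the net flow across the boundary of $S$.

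From this identity, since $0 \le f(e) \le c(e)$ for every edge, I would read off the weak-duality bound
$$|f| = \sum_{e \in \delta^+(S)} f(e) - \sum_{e \in \delta^-(S)} f(e) \le \sum_{e \in \delta^+(S)} c(e) = c(\delta^+(S)),$$
and observe that equality holds if and only if every forward edge of the cut is saturated, that is $f(e) = c(e)$ for all $e \in \delta^+(S)$, and every backward edge is empty, that is $f(e) = 0$ for all $e \in \delta^-(S)$. By the Max-Flow Min-Cut Theorem, $\delta^+(S)$ is a minimum cut precisely when $c(\delta^+(S)) = |f|$, hence precisely when this saturation-and-emptiness condition holds.

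The remaining step, which I expect to be the crux, is to check that this saturation-and-emptiness condition is exactly the stated residual condition that for every $u \in S$, if $(u,v) \in E_f$ then $v \in S$. Using the definition $E_f = \{(u,v) \mid c(u,v) - f(u,v) > 0\}$, a residual edge crossing out of $S$ can arise in only two ways: as an unsaturated forward edge $(u,v) \in \delta^+(S)$ with $f(u,v) < c(u,v)$, or as the reverse of a nonempty edge $(v,u) \in \delta^-(S)$ with $f(v,u) > 0$. Thus the absence of any residual edge leaving $S$ is equivalent to saying that all cut-forward edges are saturated and all cut-backward edges are empty, which is exactly the equality condition obtained above, and both directions of the theorem follow at once. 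The only real care needed is bookkeeping the antisymmetric flow convention so that forward and backward residual edges are matched correctly against $\delta^+(S)$ and $\delta^-(S)$; once this correspondence is pinned down, the argument is immediate.
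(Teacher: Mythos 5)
Your proposal is correct and follows essentially the same route as the paper: both arguments rest on the inequality $c(\delta^+(S)) \geq f(\delta^+(S)) - f(\delta^-(S))$, invoke the Max-Flow Min-Cut Theorem to characterize when equality holds, and then translate the saturation-and-emptiness condition on the cut edges into the absence of residual edges leaving $S$. Your version is somewhat more explicit about deriving the flow-across-the-cut identity and about the bookkeeping for reverse residual edges, but the underlying argument is the same.
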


\begin{proof}
For any flow $f'$ on $D$ and the $s-t$ cut $\delta^+(S')$, the following inequality holds:
$$
c(\delta^+(S')) \geq f'(\delta^+(S')) - f'(\delta^-(S'))
$$
According to the max-flow min-cut theorem, equality in the above inequality holds if and only if $f'$ and $\delta^+(S')$ are the maximum flow and minimum cut of $D$, respectively.

Let $\delta^+(S)$ be the minimum cut of $D$.
There exists $(u,v) \in E_f$ such that $u \in S$ and $v \in V \setminus S$.
Thus, for any $(u,v) \in \delta^+(S)$, we have $f(u,v) = c(u,v)$ or $f(v,u) = 0$, meaning $(u,v) \notin E_f$, which is a contradiction.

On the other hand, assume $\delta^+(S)$ is an $s-t$ cut, and if $u \in S$ and $(u,v) \in E_f$, then $v \in S$.
Thus, for any $(u,v) \in \delta^+(S)$, we have $f(u,v) = c(u,v)$ or $f(v,u) = 0$.
Hence, the above inequality achieves equality, meaning $\delta^+(S)$ is a minimum cut.
\end{proof}

\begin{cor} \label{Cor:dec-cut}
Then the $s-t$ cut $\delta^+(S)$ is a minimum cut in $D$ if and only if for any $u \in S$, if $(u,v)\in E\setminus A$ or $(v,u)\in A$, then $v\in S$.
\end{cor}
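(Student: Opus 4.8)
The plan is to derive the criterion from the Picard--Queyranne characterization (Theorem \ref{Thm:PQ}) together with the edge classification of Theorem \ref{Thm:net-dec}, evaluated at a conveniently chosen maximum flow. First I would fix a \emph{strictly complementary} maximum flow $f^{\ast}$, one with $0<f^{\ast}(e)<c(e)$ for every $e\in C$, with $f^{\ast}(e)=c(e)$ for every $e\in A$, and with $f^{\ast}(e)=0$ for every $e\in R$. Such an $f^{\ast}$ exists: each dummy I edge is strictly interior for some maximum flow, and since the maximum flows form a convex set, an average of finitely many of them keeps all essential edges saturated and all dummy II edges empty while making every dummy I edge interior at once. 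For this $f^{\ast}$ the residual arcs are read off directly from the edge types, namely $(u,v)\in E_{f^{\ast}}$ if and only if $(u,v)\in C\cup R$ or $(v,u)\in A\cup C$. Feeding this into Theorem \ref{Thm:PQ}, $\delta^+(S)$ is a minimum cut if and only if, for every $u\in S$, both implications ``$(u,v)\in C\cup R\Rightarrow v\in S$'' and ``$(v,u)\in A\cup C\Rightarrow v\in S$'' hold.

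Now I would compare this with the asserted condition. Since $E\setminus A=C\cup R$, its out-arc clause is \emph{identical} to the first residual implication, so that half is free. The forward direction is then immediate: if $\delta^+(S)$ is a minimum cut it satisfies the residual condition, and the residual in-arc clause ``$(v,u)\in A\cup C$'' is stronger than the asserted clause ``$(v,u)\in A$''. For the reverse direction I would assume the stated condition and try to \emph{upgrade} its in-arc clause from $A$ to $A\cup C$, which is exactly what is needed to recover the residual condition and conclude via Theorem \ref{Thm:PQ}. Concretely, suppose a dummy I arc $(v,u)\in C$ enters $S$, with $v\notin S$ and $u\in S$. By Theorem \ref{Thm:net-dec} there is $B\in\mathcal{B}$ with $\{u,v\}\subseteq B$, and this $B$ straddles the cut. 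It would suffice to exhibit a dummy I arc \emph{leaving} $S$, i.e.\ an original edge from $B\cap S$ to $B\setminus S$: such an edge lies in $C\subseteq E\setminus A$ and contradicts the stated out-arc clause, forcing $v\in S$.

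Proving that a straddling strongly connected component must emit a forward original edge across the cut is the step I expect to be the main obstacle. The difficulty is that $D_{f^{\ast}}[B]$ is strongly connected only in the \emph{residual} graph: an $E_{f^{\ast}}$-arc from $B\cap S$ to $B\setminus S$ may merely be the backward arc of an original dummy I edge oriented from $B\setminus S$ into $B\cap S$, so residual connectivity alone supplies no original forward edge. Natural repairs --- a flow-conservation count on $B\setminus S$, or tracing an augmenting cycle that certifies a crossing edge is dummy I --- fail to close, because the conservation balance is absorbed by the essential and dummy II arcs linking $B$ to the rest of $D$, and the augmenting cycles may cross the cut only through backward residual arcs.

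In fact these attempts expose a genuine gap, so I expect this step, and hence the corollary as worded, to be false. A minimal obstruction has vertices $s,x,y_1,y_2,t$ with unit-capacity edges $(s,y_1),(s,y_2)$ and capacity-two edges $(y_1,x),(y_2,x),(y_1,y_2),(x,t)$. Here the maximum flow value is $2$, the essential edges are $(s,y_1),(s,y_2),(x,t)$, and $(y_1,x),(y_2,x),(y_1,y_2)$ are dummy I, so $B=\{x,y_1,y_2\}\in\mathcal{B}$. For the cut $S=\{s,x\}$ every original edge of $B$ crossing the cut, namely $(y_1,x)$ and $(y_2,x)$, points \emph{into} $S$; the asserted condition then holds vacuously at both $s$ and $x$, yet $c(\delta^+(S))=4$ exceeds the maximum flow value $2$, so $S$ is not a minimum cut. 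This is precisely the missing case: the only way to salvage the equivalence is to strengthen the in-arc clause to ``$(v,u)\in A\cup C$'', equivalently ``$(v,u)\in E\setminus R$'', which reinstates the residual condition above.
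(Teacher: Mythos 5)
Your analysis is correct, and it identifies a genuine error: Corollary \ref{Cor:dec-cut} as stated is false, and your counterexample is valid. In your network the maximum flow value is $2$; the edges $(s,y_1),(s,y_2),(x,t)$ are saturated in every maximum flow (hence in $A$), while $(y_1,x),(y_2,x),(y_1,y_2)$ each admit a maximum flow placing them strictly between $0$ and capacity (hence in $C$), so $\mathcal{B}=\{\{s\},\{x,y_1,y_2\},\{t\}\}$. For $S=\{s,x\}$ the stated condition is vacuous at both vertices of $S$ --- every edge leaving $S$ lies in $A$, and every edge entering $x$ lies in $C$ --- yet $c(\delta^+(S))=4>2$, so $\delta^+(S)$ is not a minimum cut. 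The failure is exactly where you located it. The paper's one-line proof asserts that $(u,v)\in E_f$ if and only if $(u,v)\in E\setminus A$ or $(v,u)\in A$, but this arc-level equivalence breaks for the backward residual arc of a dummy I edge carrying positive flow: in your example $(x,y_1)\in E_f$ while $(y_1,x)\in C$ and $(x,y_1)\notin E$. Such backward arcs are precisely what Theorem \ref{Thm:PQ} needs in order to propagate membership in $S$, and the clause ``$(v,u)\in A$'' cannot see them.

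Your repair is also the right one. With the in-arc clause strengthened to ``$(v,u)\in E\setminus R$'' (equivalently $(v,u)\in A\cup C$), both directions go through: a minimum cut satisfies the strengthened condition because Theorem \ref{Thm:PQ} may be applied to a maximum flow witnessing the classification of the relevant edge (or once and for all to your strictly complementary $f^{\ast}$, whose existence by averaging maximum flows is a correct instance of Theorem \ref{Thm:scc}); conversely, for an arbitrary maximum flow $f$, every arc of $E_f$ is either a forward arc lying in $E\setminus A$ or the reversal of an edge in $E\setminus R$, so the strengthened closure condition implies $E_f$-closure and Theorem \ref{Thm:PQ} applies. It is worth noting that the paper itself implicitly relies on your corrected version: the proof of Lemma \ref{Lem:C-cut} cites Corollary \ref{Cor:dec-cut} to conclude that a dummy I edge cannot lie in $\delta^-(S)$, and that conclusion follows only from the clause $(v,u)\in A\cup C$, not from $(v,u)\in A$. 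So the statement you propose is the one the rest of Section \ref{Sec:dec-cut} actually needs.
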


\begin{proof}
Notice that $(u,v) \in E_f$ if and only if $(u,v)\in E\setminus A$ or $(v,u)\in A$.
\end{proof}

Let $\mathcal{C}$ denote the set of all minimum cuts in $D$.
We now study the distribution of the minimum cut in the network with respect to the sets $A$, $C$ and $R$.

\begin{lem} \label{Lem:A-cut}
$A = \bigcup_{\delta^+(S) \in \mathcal{C}} \delta^+(S)$.
\end{lem}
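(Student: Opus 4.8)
The plan is to prove the two inclusions $\bigcup_{\delta^+(S)\in\mathcal{C}}\delta^+(S)\subseteq A$ and $A\subseteq\bigcup_{\delta^+(S)\in\mathcal{C}}\delta^+(S)$ separately, fixing once and for all a maximum flow $f$ and working inside the residual network $D_f$ (writing $x \rightsquigarrow y$ for ``there is a directed residual path from $x$ to $y$ in $D_f$''). For the inclusion ``$\subseteq$'', I would take a minimum cut $\delta^+(S)\in\mathcal{C}$ and an edge $e=(u,v)\in\delta^+(S)$, so that $u\in S$ and $v\notin S$. By Corollary \ref{Cor:dec-cut}, being a minimum cut means that $u\in S$ together with $(u,v)\in E\setminus A$ would force $v\in S$; since $v\notin S$, this rules out $(u,v)\in E\setminus A$, whence $e\in A$. (Equivalently, the equality $c(\delta^+(S))=|f|$ appearing in the proof of Theorem \ref{Thm:PQ} forces every forward cut edge to be saturated by every maximum flow.) This direction is routine.

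The substantial direction is ``$\supseteq$'': given $e=(u,v)\in A$, I must exhibit one minimum cut containing $e$. By Theorem \ref{Thm:net-dec}(1) we have $f(e)>0$ and the endpoints $u,v$ lie in different strongly connected components of $D_f$. Since $f(u,v)>0$, the reverse arc $(v,u)$ lies in $E_f$; combined with $u,v$ being in distinct components this already yields that $u$ cannot reach $v$, because a residual path $u\rightsquigarrow v$ together with the arc $(v,u)$ would place $u,v$ in a common component. The two remaining facts I need are $(\mathrm{i})$ that $s$ cannot reach $v$, and $(\mathrm{ii})$ that $u$ cannot reach $t$.

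Establishing $(\mathrm{i})$ and $(\mathrm{ii})$ is where I expect the real work to lie. The key auxiliary claim is that an essential edge sits on an $s$–$t$ path of the flow. Decomposing $f$ into $s$–$t$ path flows and cycle flows, the value $f(e)>0$ places $e$ on some path or some cycle of the decomposition; if $e$ lay on a cycle, then reversing the remaining positive-flow arcs of that cycle would produce a residual path $u\rightsquigarrow v$, contradicting that $u,v$ are in different components. Hence $e$ lies on an $s$–$t$ flow path of the shape $s\rightsquigarrow u\to v\rightsquigarrow t$, and reversing its positive-flow prefix and suffix gives residual paths $u\rightsquigarrow s$ and $t\rightsquigarrow v$. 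Now $(\mathrm{i})$ follows, since a residual path $s\rightsquigarrow v$ would compose with $u\rightsquigarrow s$ to give $u\rightsquigarrow v$, contradicting distinct components; symmetrically $u\rightsquigarrow t$ would compose with $t\rightsquigarrow v$ to give $u\rightsquigarrow v$, proving $(\mathrm{ii})$.

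With these facts in hand the construction is immediate. I set $S=\{w\in V \mid w \text{ is reachable from } u \text{ in } D_f\}$. By construction $S$ is closed under residual out-arcs, so by Theorem \ref{Thm:PQ} it defines a minimum cut provided $s\in S$ and $t\notin S$; indeed $s\in S$ via the residual path $u\rightsquigarrow s$, while $t\notin S$ is precisely $(\mathrm{ii})$. Finally $u\in S$ trivially and $v\notin S$ because $u$ cannot reach $v$, so $e=(u,v)\in\delta^+(S)$, placing $e$ in the union and completing the argument. The only genuinely nontrivial step is the flow-decomposition lemma of the third paragraph; everything else is a short deduction from Theorems \ref{Thm:net-dec} and \ref{Thm:PQ}.
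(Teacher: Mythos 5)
Your proof is correct and follows the same basic strategy as the paper's: the easy inclusion falls out of Picard--Queyranne (Theorem \ref{Thm:PQ} / Corollary \ref{Cor:dec-cut}), and the hard inclusion is handled by taking $S$ to be a residual-reachability set and checking it is a minimum cut containing $e$. The one genuine difference is that you insert a flow-decomposition step to produce the residual paths $u\rightsquigarrow s$ and $t\rightsquigarrow v$, and this is not mere pedantry: the paper's own proof needs exactly these facts and never establishes them. Specifically, the paper never verifies that $t\notin S$ (required before Theorem \ref{Thm:PQ} can certify $\delta^+(S)$ as an $s$--$t$ cut at all), and its claim that a residual path from $s$ to $v$ would place $v$ in the start vertex set is a non sequitur as written, since reachability from $s$ alone does not put $v$ in the strongly connected component of $s$ --- one needs a return path, which is precisely what your $u\rightsquigarrow s$ (composed with $(v,u)\in E_f$) provides. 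So your write-up is essentially a repaired version of the paper's argument; the cost is invoking flow decomposition, which the paper nowhere states as a lemma, but that is a standard tool and the price is worth paying for a complete proof.
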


\begin{proof}
Let $e = (u,v) \in A$ and $f$ be the maximum flow of $D$.
The set $S$ consists of $s$, $u$, and all other vertices reachable from these two vertices in $D_f$.
By the Theorem \ref{Thm:PQ}, $\delta^+(S)$ is a minimum cut in $D$.

Moreover, $v$ must not be in $S$, otherwise there would be a directed path from $s$ or $u$ to $v$ in $D_f$.
If there is a directed path from $u$ to $v$ in $D_f$, it means that $v$ would be in the same strongly connected component with $u$ in $D_f$.
By Theorem \ref{Thm:net-dec}, we obtain $e \in C$, which is a contradiction.

If there is a directed path from $s$ to $v$ in $D_f$, it means that $v$ would be in the start vertex set by using Proposition \ref{Prop:start-end}.
According to Proposition \ref{Prop:start-end}, we obtain $e \in E\setminus A$, which is a contradiction.

Conversely, assume $e = (u,v) \in \delta^+(S) \in \mathcal{C}$ and $e \notin A$.
By definition \ref{Defn:edge}, there exists a maximum flow $f$ in $D$ such that $f(e) < c(e)$.
Thus, $e \in E_f$.
According to the Theorem \ref{Thm:PQ}, we obtain $v \in S$, which is a contradiction.
\end{proof}

\begin{lem} \label{Lem:C-cut}
If $\delta^+(S)$ is a minimum cut in $D$, then there exists $\mathcal{B}' \subseteq \mathcal{B}$ such that $S = \bigcup_{B \in \mathcal{B}'} B$.
\end{lem}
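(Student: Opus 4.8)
The plan is to read a minimum cut through the residual characterization already in hand. By Theorem \ref{Thm:PQ}, if $\delta^+(S)$ is a minimum cut then $S$ is \emph{forward-closed} in the residual network $D_f$: for every $u \in S$ and every $(u,v) \in E_f$ we have $v \in S$. Since, by Lemma \ref{Defn:B}, the strongly connected components of $D_f$ (whose vertex sets are exactly the members of $\mathcal{B}$) do not depend on the chosen maximum flow $f$, it then suffices to establish the purely combinatorial fact that any forward-closed subset of $V$ is a union of members of $\mathcal{B}$.

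To prove this I would show that each $B \in \mathcal{B}$ is either contained in $S$ or disjoint from it. Suppose $B \cap S \neq \emptyset$ and fix $w \in B \cap S$; let $w'$ be an arbitrary vertex of $B$. Since $D_f[B]$ is strongly connected (Definition \ref{Defn:SCC}), there is a directed path $w = x_0, x_1, \ldots, x_k = w'$ lying inside $D_f[B]$, so each $(x_i, x_{i+1}) \in E_f$. Applying the forward-closure of $S$ inductively along this path yields $x_1, \ldots, x_k \in S$, whence $w' \in S$. As $w'$ was arbitrary, $B \subseteq S$, so every component meeting $S$ is absorbed into $S$.

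Finally, because the vertex sets in $\mathcal{B}$ partition $V$, setting $\mathcal{B}' = \{B \in \mathcal{B} \mid B \subseteq S\}$ gives $\bigcup_{B \in \mathcal{B}'} B \subseteq S$ immediately, while every $v \in S$ lies in a unique component $B \ni v$ which, by the dichotomy above, satisfies $B \subseteq S$ and hence belongs to $\mathcal{B}'$; this supplies the reverse inclusion and thus $S = \bigcup_{B \in \mathcal{B}'} B$. The only step requiring genuine care is the passage from the edge-local closure granted by Theorem \ref{Thm:PQ} to closure along an entire residual path within a component, which is exactly where strong connectivity is invoked; the remainder is the routine observation that the strongly connected components partition $V$. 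I do not anticipate a real obstacle, since the lemma is essentially the assertion that forward-closed sets in a digraph are unions of its strongly connected components.
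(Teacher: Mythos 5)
Your proof is correct, but it takes a genuinely different route from the paper. The paper's proof stays at the level of the edge classification: it invokes Theorem \ref{Thm:net-dec} to reduce the claim to showing that no dummy I edge crosses the boundary of $S$ in either direction, then disposes of the two directions by citing Corollary \ref{Cor:dec-cut} (no $C$-edge enters $S$) and Lemma \ref{Lem:A-cut} (no $C$-edge leaves $S$, since every edge leaving $S$ in a minimum cut is essential); the final passage from ``$S$ is not crossed by $C$-edges'' to ``$S$ is a union of members of $\mathcal{B}$'' is left implicit, resting on the fact that each $B$ is connected by $C$-edges. You instead work directly in the residual network: Theorem \ref{Thm:PQ} makes $S$ forward-closed in $D_f$, and you prove the clean combinatorial fact that a forward-closed vertex set in a digraph is a union of strongly connected components, propagating membership in $S$ along a residual path inside $D_f[B]$ from a vertex of $B \cap S$ to an arbitrary vertex of $B$. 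Your argument is more elementary and self-contained --- it needs only Theorem \ref{Thm:PQ} and the definition of $\mathcal{B}$, not the machinery of Lemma \ref{Lem:A-cut} or Corollary \ref{Cor:dec-cut} --- and it makes explicit the partition step that the paper glosses over. What the paper's route buys is a sharper by-product, namely the statement that $C$-edges never appear in $\delta^+(S) \cup \delta^-(S)$ for any minimum cut, which is the form in which the lemma is actually reused later (e.g.\ in Lemma \ref{Lem:cut-pot}); your version yields that statement too, but only after combining it with Theorem \ref{Thm:net-dec}.
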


\begin{proof}
Using Theorem \ref{Thm:net-dec}, it is sufficient to prove that if $e \in C$, then $e \notin \delta^+(S) \cup \delta^-(S)$ for any $\delta^+(S) \in \mathcal{C}$.
By Corollary \ref{Cor:dec-cut}, we know that $e \notin \delta^-(S)$.
Additionally, Lemma \ref{Lem:A-cut} implies that $e \notin \delta^+(S)$.
Therefore, we conclude that $e \notin \delta^+(S) \cup \delta^-(S)$.
\end{proof}

Next, we analyze the relationship between dummy II edges and minimum cuts.

\begin{defn}
Let $u,v \in V$.
If there exists a path $P_{uv}$ in $D$ from $u$ to $v$ composed solely of dummy edges and containing at least one dummy II edge, then $P_{uv}$ is called a \textit{jump} from $u$ to $v$.
\end{defn}

Evidently, a jump must contain at least one dummy II edge, and it has the following relationship with minimum cuts.

\begin{prop} \label{Prop:jump}
Let $P_{uv}$ be a subpath in $D$ consisting of dummy edges only.
Then $P_{uv}$ is a jump if and only if there exists a minimum cut $\delta^+(S)$ such that $u \in V \setminus S$ and $v \in S$.
\end{prop}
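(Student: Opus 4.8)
The plan is to pass to the condensation of the residual network $D_f$ and translate the statement into a combinatorial fact about successor-closed sets of a DAG. First I would record the dictionary. By Theorem~\ref{Thm:PQ} together with Lemma~\ref{Lem:C-cut}, the minimum cuts $\delta^+(S)$ are exactly the sets $S$ that are unions of components $B\in\mathcal B$ and are closed under the residual edges $E_f$; since within-component edges are harmless, this means $S$ (viewed as a set of components) must be successor-closed in the condensation $H=D_f/\mathcal B$, must contain the component $B_s$ of $s$, and must avoid the component $B_t$ of $t$. Using Corollary~\ref{Cor:dec-cut} and Theorem~\ref{Thm:net-dec} I would identify the inter-component arcs of $H$: a dummy~II edge $(x,y)\in R$ yields a forward arc $B_x\to B_y$, while an essential edge $(x,y)\in A$ yields a reversed arc $B_y\to B_x$, and dummy~I edges stay inside a single component and contribute nothing. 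Finally, Proposition~\ref{Prop:start-end} shows that no dummy~II or essential edge can leave $B_s$ or enter $B_t$, so $B_s$ is a sink and $B_t$ a source of $H$.

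For the easy direction ($\Leftarrow$), suppose such a minimum cut $S$ exists. Following the dummy path $P_{uv}=u=w_0\to\cdots\to w_k=v$, since $u\notin S$ and $v\in S$ there is a first arc $(w_{i-1},w_i)$ entering $S$, i.e.\ lying in $\delta^-(S)$. This arc is a dummy edge, and by the argument in the proof of Lemma~\ref{Lem:C-cut} a dummy~I edge can never lie in $\delta^+(S)\cup\delta^-(S)$; hence $(w_{i-1},w_i)\in R$, so $P_{uv}$ contains a dummy~II edge and is a jump.

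For the main direction ($\Rightarrow$), I would extract from the jump the chain of components along $P_{uv}$: each dummy~I edge keeps the component fixed and each dummy~II edge advances one forward arc in $H$, so $B_v$ is reachable from $B_u$ in $H$, strictly so because the path contains at least one dummy~II edge; in particular $B_u\neq B_v$. I would then prove the two endpoint lemmas $B_u\neq B_s$ and $B_v\neq B_t$: taking the \emph{first} dummy~II edge $(a,b)$ of the path, all preceding edges are dummy~I so $B_u=B_a$, and if $u\in B_s$ then $a\in B_s$, contradicting Proposition~\ref{Prop:start-end}; symmetrically the \emph{last} dummy~II edge forces $B_v\neq B_t$. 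With these in hand the cut is explicit: set $S=B_s\cup\bigcup\{B\in\mathcal B : B_v \leadsto_H B\}$, the union of the start component with all $H$-descendants of $B_v$. This set is successor-closed (adjoining the sink $B_s$ preserves closure), contains $B_s$ and $B_v$, excludes $B_t$ (distinct from $B_s$, and a source unequal to $B_v$, hence not a descendant of $B_v$), and excludes $B_u$ (distinct from $B_s$, and a strict ancestor of $B_v$ in a DAG); by the dictionary above, $\delta^+(S)$ is then a minimum cut with $u\in V\setminus S$ and $v\in S$.

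I expect the main obstacle to be the $(\Rightarrow)$ construction, and specifically verifying all five requirements on $S$ simultaneously. The delicate points are the two endpoint lemmas $B_u\neq B_s$ and $B_v\neq B_t$ — without them $u$ could be forced into every cut or $v$ forced out of every cut — which is exactly where the hypothesis that the path is a jump (contains a dummy~II edge) together with Proposition~\ref{Prop:start-end} become essential; the acyclicity of $H$ is then what guarantees that the ancestor $B_u$ is not swept into the descendant set of $B_v$.
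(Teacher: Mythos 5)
Your proof is correct, and it follows the same broad strategy as the paper (both directions ultimately rest on the Picard--Queyranne characterization of minimum cuts as residual-closed sets, and your $\Leftarrow$ argument is essentially the paper's), but your $\Rightarrow$ construction is genuinely different in its details. The paper takes $S$ to be the forward-reachability closure of $\{s,q,v\}$ in $D_f$, where $(p,q)$ is a dummy~II edge of the jump, and then argues only that $v$ cannot reach $u$ or $p$; it never explicitly checks that $t\notin S$, nor that $u,p$ are unreachable from $s$ and $q$, so its verification is incomplete as written. You instead pass to the condensation $H=D_f/\mathcal{B}$, observe via Proposition~\ref{Prop:start-end} that $B_s$ is a sink and $B_t$ a source, and take $S=B_s\cup\{\text{descendants of }B_v\}$; acyclicity of $H$ then cleanly excludes the strict ancestor $B_u$, and the source property excludes $B_t$. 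This buys you a complete verification of all the conditions (closure, $s\in S$, $t\notin S$, $v\in S$, $u\notin S$) at the cost of setting up the condensation dictionary, and it isolates exactly where the jump hypothesis is used (the two endpoint lemmas $B_u\neq B_s$, $B_v\neq B_t$). The one point worth making explicit in a write-up is that your list of inter-component arcs of $H$ is exhaustive because, by Theorem~\ref{Thm:net-dec}, every inter-component edge lies in $A\cup R$ and is therefore saturated or empty in every maximum flow, so it contributes only the single residual arc you describe.
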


\begin{proof}
Assume $P_{uv}$ is a jump, and let $(p,q) \in R$ be a dummy II edge on $P_{uv}$.
Let $f$ be the maximum flow of $D$.
Let $S$ be the set consisting of $s$, $q$, $v$, and all other vertices reachable from these two vertices in $D_f$.
By Theorem \ref{Thm:PQ}, $\delta^+(S)$ is a minimum cut in $D$.
Since there is no directed path from $v$ to $u$ or $p$ in $D_f$ (otherwise, these two vertices would be in the same strongly connected component in $D_f$), it follows that $u, p \in V \setminus S$.

On the other hand, suppose that there exists a minimum cut $\delta^+(S)$ such that $u \in V \setminus S$ and $v \in S$.
If $P_{uv}$ consists of dummy I edges only, according to Corollary \ref{Lem:C-cut}, we would have $u, v \in S$ or $u, v \in V \setminus S$, which is a contradiction.
\end{proof}

It is worth noting that even if both ends of a jump are in the transfer vertex set or direct vertex set, it does not guarantee the existence of other essential edges and dummy I edges that together form an $s-t$ path, as illustrated in Example \ref{Ex:net}.

Since dummy II edges are minimal jumps, we have the following corollary.

\begin{cor} \label{Cor:R-cut}
$R = \bigcup_{\delta^+(S) \in \mathcal{C}} \delta^-(S)$.
\end{cor}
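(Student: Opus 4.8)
The plan is to prove the two inclusions separately. The forward inclusion $R \subseteq \bigcup_{\delta^+(S)\in\mathcal{C}}\delta^-(S)$ is exactly the ``minimal jump'' remark made just before the statement, so I would extract it from Proposition~\ref{Prop:jump}. The reverse inclusion I would obtain by eliminating the other two edge types using Corollary~\ref{Cor:dec-cut} and Lemma~\ref{Lem:C-cut}, together with the fact that $A$, $C$, $R$ partition $E$.

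For $R \subseteq \bigcup_{\delta^+(S)\in\mathcal{C}}\delta^-(S)$, I would take an arbitrary dummy II edge $e=(p,q)\in R$ and observe that the one-edge path $P_{pq}=(p,q)$ is itself a jump: it is a path composed solely of dummy edges, and it contains at least one dummy II edge, namely $e$ itself. Applying Proposition~\ref{Prop:jump} to this trivial jump then produces a minimum cut $\delta^+(S)$ with $p\in V\setminus S$ and $q\in S$, which is precisely the assertion $e\in\delta^-(S)$. Thus every dummy II edge belongs to $\delta^-(S)$ for some minimum cut, establishing this inclusion.

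For the reverse inclusion, I would fix a minimum cut $\delta^+(S)$ and an edge $e=(p,q)\in\delta^-(S)$, so that $p\in V\setminus S$ and $q\in S$, and show that $e$ can be neither essential nor dummy I. If $e\in A$, then reading Corollary~\ref{Cor:dec-cut} at the vertex $q\in S$ with the essential edge $(p,q)$ entering $q$ forces $p\in S$, contradicting $p\in V\setminus S$; hence $e\notin A$. If $e\in C$, then the proof of Lemma~\ref{Lem:C-cut} already records that a dummy I edge lies in neither $\delta^+(S)$ nor $\delta^-(S)$ for any minimum cut, again a contradiction; hence $e\notin C$. Since $A$, $C$, $R$ partition the edge set, it follows that $e\in R$, completing the inclusion and hence the equality.

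I do not expect a genuine obstacle here: both inclusions are short given the earlier results. The only points requiring care are the orientation bookkeeping, making sure $p$ is treated as the tail outside $S$ and $q$ as the head inside $S$ so that $\delta^-(S)=\delta^+(V\setminus S)$ is applied consistently, and the recognition that a single dummy II edge literally satisfies the definition of a jump. Once these are handled, Proposition~\ref{Prop:jump}, Corollary~\ref{Cor:dec-cut}, and Lemma~\ref{Lem:C-cut} do all the work.
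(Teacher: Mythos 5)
Your proposal is correct and follows essentially the same route as the paper: the forward inclusion by viewing a single dummy II edge as a (minimal) jump and invoking Proposition~\ref{Prop:jump}, and the reverse inclusion by ruling out $e\in A$ and $e\in C$ and appealing to the partition $E=A\cup C\cup R$. The only cosmetic difference is that you exclude essential edges via Corollary~\ref{Cor:dec-cut} where the paper cites Lemma~\ref{Lem:A-cut}; your version is, if anything, slightly more direct.
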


\begin{proof}
On the one hand, we assume that $e\in R$.
We can proof that $e\in \delta^-(S)$ for some $\delta^+(S) \in \mathcal{C}$ directly by taking jump as $e$ in Proposition \ref{Prop:jump}.

On the other hand, if $e\in \delta^-(S)$ for some $\delta^+(S) \in \mathcal{C}$, it follows that $e\in R$ by using Lemma \ref{Lem:A-cut} and Lemma \ref{Lem:C-cut}.
\end{proof}

\section{Decomposition of Networks and Potential Functions} \label{Sec:dec-pot}

Imagine water flowing down a slope; during this process, the gravitational potential energy of the water gradually transforms into kinetic energy necessary for flow.
The potential function in the network describes the "potential" energy of the network flow at the vertices, defined as follows:

\begin{defn}
Let $\pi: V \rightarrow [0, 1]$ be a function on the vertices.
If $\pi$ satisfies:

\begin{enumerate}
\item $\pi(s) = 1$, $\pi(t) = 0$,
\item If $(u,v) \in A$, then $\pi(u) \geq \pi(v)$,
\item If $(u,v) \in E \setminus A$, then $\pi(u) \leq \pi(v)$,
\end{enumerate}

then $\pi$ is called a \textit{potential function} on $D$.
\end{defn}

The relationship between potential functions and minimum cuts is as follows:

\begin{lem} \label{Lem:cut-pot}
Let $S \subseteq V$.
Then $\delta^+(S)$ is a minimum cut of $D$ if and only if $\chi(S)$ is a potential function.
\end{lem}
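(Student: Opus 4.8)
The plan is to unwind both sides of the equivalence into statements about the membership of edges in $\delta^+(S)$ and $\delta^-(S)$, and then match them via Corollary \ref{Cor:dec-cut}. Write $\chi = \chi(S)$ for the indicator function of $S$, so that $\chi(v) = 1$ if $v \in S$ and $\chi(v) = 0$ otherwise; in particular $\chi$ already takes values in $[0,1]$, so the only content of the first defining condition of a potential function, namely $\chi(s) = 1$ and $\chi(t) = 0$, is that $s \in S$ and $t \notin S$, i.e.\ that $\delta^+(S)$ is an $s$-$t$ cut. Since a minimum cut is by definition an $s$-$t$ cut, this condition holds automatically on the minimum-cut side, and it is exactly the $s$-$t$ cut hypothesis needed to invoke Corollary \ref{Cor:dec-cut} on the potential-function side.

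Next I would translate the two monotonicity conditions into edge-membership statements. For an essential edge $(u,v) \in A$, the requirement $\chi(u) \geq \chi(v)$ can fail only when $\chi(u) = 0$ and $\chi(v) = 1$, that is, precisely when $(u,v)$ is an essential edge entering $S$; hence the second defining condition of a potential function holds if and only if $\delta^-(S)$ contains no essential edge. Symmetrically, for a non-essential edge $(u,v) \in E \setminus A$, the requirement $\chi(u) \leq \chi(v)$ fails only when $u \in S$ and $v \notin S$, i.e.\ when a non-essential edge leaves $S$; hence the third condition holds if and only if every edge of $\delta^+(S)$ is essential.

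Finally I would observe that these two reformulations are precisely the two clauses of the criterion in Corollary \ref{Cor:dec-cut}: the statement that no essential edge enters $S$ is the clause asserting that for $u \in S$, $(v,u) \in A$ implies $v \in S$, while the statement that every edge leaving $S$ is essential is the clause asserting that for $u \in S$, $(u,v) \in E \setminus A$ implies $v \in S$. Combining these with the $s$-$t$ cut condition extracted in the first step, $\chi(S)$ is a potential function if and only if $\delta^+(S)$ satisfies the characterization of Corollary \ref{Cor:dec-cut}, i.e.\ if and only if $\delta^+(S)$ is a minimum cut, which is the desired equivalence.

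I do not expect a genuine obstacle here; the argument is essentially a bookkeeping exercise that reads off the logical equivalences between the indicator function's monotonicity along edges and the cut's edge-membership conditions. The one point demanding care is tracking the orientation conventions, so that the failure of $\chi(u) \geq \chi(v)$ on essential edges is correctly identified with an edge of $\delta^-(S)$ while the failure of $\chi(u) \leq \chi(v)$ on non-essential edges is identified with an edge of $\delta^+(S)$, and then matching each to the correct clause of Corollary \ref{Cor:dec-cut}.
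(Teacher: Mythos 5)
Your proof is correct, and it takes a noticeably more economical route than the paper's. You reduce the whole equivalence to a single application of Corollary \ref{Cor:dec-cut}: for a $\{0,1\}$-valued function, the two monotonicity conditions in the definition of a potential function say exactly that $\delta^+(S)$ contains no non-essential edge and $\delta^-(S)$ contains no essential edge, which is verbatim the criterion of Corollary \ref{Cor:dec-cut}, while condition (1) is exactly the $s$-$t$ cut requirement needed to invoke that corollary. The paper instead treats the two directions by different means: for the forward direction it cites the three distribution results (Lemma \ref{Lem:A-cut}, Lemma \ref{Lem:C-cut}, and Corollary \ref{Cor:R-cut}) to handle the edge classes $A$, $C$, $R$ separately --- in particular it derives the stronger fact $\chi(S)(u) = \chi(S)(v)$ on dummy I edges, which your argument correctly identifies as unnecessary for the definition of a potential function --- and for the converse it returns to Theorem \ref{Thm:PQ} and the residual network, essentially re-deriving Corollary \ref{Cor:dec-cut} inside the proof. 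Your version is self-contained given that corollary and makes the lemma transparently a bookkeeping equivalence; the paper's version has the side benefit of exhibiting how each of $A$, $C$, $R$ constrains the indicator potential, which foreshadows the later lemma on the sign of $\text{diff}(\pi,e)$. The one delicate point --- that Corollary \ref{Cor:dec-cut} presupposes an $s$-$t$ cut, so $s \in S$ and $t \notin S$ must be extracted from condition (1) before the corollary can be applied in the converse direction --- is one you address explicitly and handle with the correct orientation conventions.
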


\begin{proof}
Assume $\delta^+(S)$ is a minimum cut of $D$, and let $e = (u,v) \in E$ be an edge in $D$.
If $e \in A$, by Lemma \ref{Lem:A-cut}, we have $\chi(S)(u) \geq \chi(S)(v)$.
If $e \in C$, by Corollary \ref{Lem:C-cut}, it follows that $\chi(S)(u) = \chi(S)(v)$.
If $e \in R$, by Corollary \ref{Cor:R-cut}, we have $\chi(S)(u) \leq \chi(S)(v)$.
Thus, we proof that $\chi(S)$ is a potential function.

Conversely, assume $\chi(S)$ is a potential function.
We will prove that $\delta^+(S)$ is a minimum cut of $D$.
Since $\chi(S)(s) = 1$ and $\chi(S)(t) = 0$, we conclude that $s \in S$ and $t \in V \setminus S$.
Let $f$ be the maximum flow of $D$, and assume $u \in S$ and $(u,v) \in E_f$.
According to the Theorem \ref{Thm:PQ}, we need only show that $v \in S$.

If $(u,v) \in A$, then $(v,u) \in E_f$.
By the definition of the potential function, we have $1 \geq \chi(S)(v) \geq \chi(S)(u) = 1$, which implies $v \in S$.
If $(u,v) \in E \setminus A$, then $(u,v) \in E_f$.
By the definition of the potential function, we have $1 \geq \chi(S)(v) \geq \chi(S)(u) = 1$, which also implies $v \in S$.
\end{proof}

This conclusion can be extended to the case of convex hulls.
Let the set of all potential functions on $D$ be denoted as $\Pi$, and define
$$
\text{conv}(\mathcal{C}) = \text{conv}\{\chi(\delta^+(S)) \mid \delta^+(S) \in \mathcal{C}\}
$$
as the convex hull of all minimum cuts, where $\chi(\delta^+(S))$ is the indicator vector of $\delta^+(S)$.
For any edge $e = (u,v) \in E$, let $\text{diff}(\pi,e) = \pi(u) - \pi(v)$ denote the \textit{potential difference} of $e$ under $\pi$.
Given a potential function $\pi$, define $\text{diff}^*(\pi) \in \mathbb{R}^{|E|}$ as follows:
$$
\text{diff}^*(\pi)(e) = 
\begin{cases}
\text{diff}(\pi,e), & e \in A, \\
0, & e \in E \setminus A.
\end{cases}
$$

The following lemma illustrates the one-to-one correspondence between potential functions and fractional minimum cuts:

\begin{lem}
$\text{conv}(\mathcal{C}) = \{\text{diff}^*(\pi) \mid \pi \in \Pi\}$
\end{lem}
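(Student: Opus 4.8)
The plan is to exploit the linearity of the map $\pi \mapsto \text{diff}^*(\pi)$ together with the identification of $\{0,1\}$-valued potential functions with minimum cuts furnished by Lemma~\ref{Lem:cut-pot}. First I would record the crucial pointwise identity: for any minimum cut $\delta^+(S) \in \mathcal{C}$ one has $\text{diff}^*(\chi(S)) = \chi(\delta^+(S))$ as vectors in $\mathbb{R}^{|E|}$. Indeed, on an edge $e = (u,v) \in A$ we have $\text{diff}^*(\chi(S))(e) = \chi(S)(u) - \chi(S)(v)$, which equals $1$ exactly when $u \in S$, $v \notin S$ and is otherwise $0$; the remaining case $u \notin S$, $v \in S$ (which would give $-1$) is excluded because it would force $e \in \delta^-(S) \subseteq R$ by Corollary~\ref{Cor:R-cut}, contradicting $e \in A$. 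On an edge $e \in E \setminus A$ both sides vanish, using $\delta^+(S) \subseteq A$ from Lemma~\ref{Lem:A-cut}. Since $\text{diff}^*$ is manifestly a linear function of $\pi$, this identity is the bridge between the two sides of the asserted equality.

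For the inclusion $\{\text{diff}^*(\pi) \mid \pi \in \Pi\} \subseteq \text{conv}(\mathcal{C})$, which I expect to be the substantive direction, I would show that every potential function is a convex combination of $\{0,1\}$-valued potential functions via a level-set (layer-cake) decomposition. Given $\pi \in \Pi$, for a threshold $\theta \in (0,1]$ set $S_\theta = \{v \in V \mid \pi(v) \geq \theta\}$. The monotonicity constraints defining $\Pi$ show that $\chi(S_\theta)$ is again a potential function: if $(u,v) \in A$ then $\pi(u) \geq \pi(v)$, so $v \in S_\theta$ forces $u \in S_\theta$; if $(u,v) \in E \setminus A$ then $\pi(u) \leq \pi(v)$, so $u \in S_\theta$ forces $v \in S_\theta$; and $s \in S_\theta$, $t \notin S_\theta$ because $\pi(s) = 1$, $\pi(t) = 0$. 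Listing the distinct values $1 = a_0 > a_1 > \cdots > a_k = 0$ of $\pi$ and writing $T_i = \{v \mid \pi(v) \geq a_{i-1}\}$, the layer-cake identity gives
$$
\pi = \sum_{i=1}^{k} (a_{i-1} - a_i)\, \chi(T_i),
$$
a genuine convex combination since the coefficients are positive and sum to $a_0 - a_k = 1$. Applying the linear map $\text{diff}^*$ and the identity from the first paragraph yields $\text{diff}^*(\pi) = \sum_{i} (a_{i-1}-a_i)\,\chi(\delta^+(T_i))$, a convex combination of indicator vectors of minimum cuts by Lemma~\ref{Lem:cut-pot}, hence an element of $\text{conv}(\mathcal{C})$.

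The reverse inclusion $\text{conv}(\mathcal{C}) \subseteq \{\text{diff}^*(\pi) \mid \pi \in \Pi\}$ is then routine. A point of $\text{conv}(\mathcal{C})$ has the form $\sum_j \lambda_j\, \chi(\delta^+(S_j))$ with $\lambda_j \geq 0$, $\sum_j \lambda_j = 1$, and each $\delta^+(S_j)$ a minimum cut. Setting $\pi = \sum_j \lambda_j\, \chi(S_j)$, each $\chi(S_j)$ lies in $\Pi$ by Lemma~\ref{Lem:cut-pot} and $\Pi$ is convex, being cut out by the affine equalities $\pi(s)=1$, $\pi(t)=0$ together with the linear order constraints; hence $\pi \in \Pi$. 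Then linearity and the first-paragraph identity give $\text{diff}^*(\pi) = \sum_j \lambda_j\, \chi(\delta^+(S_j))$, the chosen point. Combining the two inclusions yields the claimed equality.

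The main obstacle is the forward inclusion, and within it the verification that each super-level set $S_\theta$ yields a potential function; this is exactly where the sign conventions in the definition of $\Pi$ (monotone decrease along $A$, monotone increase along $E \setminus A$) must line up so that super-level sets respect every edge constraint. Once that is in place, the layer-cake decomposition and the linearity of $\text{diff}^*$ make everything else formal. A minor point to check carefully is that the thresholds are taken in $(0,1]$ so that $t$ is excluded from every $S_\theta$ and the resulting sets are genuine $s$–$t$ cuts.
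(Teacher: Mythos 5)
Your proposal is correct and follows essentially the same route as the paper: one direction pushes a convex combination of cut indicators through the linear map $\pi \mapsto \text{diff}^*(\pi)$, and the other decomposes an arbitrary potential function into a convex combination of indicator potential functions via its nested super-level sets, invoking Lemma~\ref{Lem:cut-pot} in both directions. If anything, you are more explicit than the paper in verifying the key identity $\text{diff}^*(\chi(S)) = \chi(\delta^+(S))$ (ruling out the $-1$ case via Corollary~\ref{Cor:R-cut}) and in checking that each super-level set yields a potential function, steps the paper leaves as ``easy to verify.''
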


\begin{proof}
Let $\delta^+(S_1),\ldots,\delta^+(S_k)$ be all minimum cuts of $D$.
For any $x \in \text{conv}(\mathcal{C})$, since $\text{conv}(\mathcal{C})$ is a convex set, we can express $x$ as:
$$
x = \lambda_1 \chi(\delta^+(S_1)) + \ldots + \lambda_k \chi(\delta^+(S_k))
$$
where $0 \leq \lambda_i \leq 1$ and $\sum_{i=1}^k \lambda_i = 1$.
Define $\pi = \sum_{i=1}^k \lambda_i \chi(S_i)$.
By Lemma \ref{Lem:cut-pot}, it is easy to verify that $\pi$ is a potential function.
Furthermore, if $e \in E \setminus A$, then $\text{diff}^*(\pi)(e) = 0$.
If $e \in A$, we have
$$
\text{diff}^*(\pi)(e) = \sum_{i=1}^k \lambda_i \text{diff}(\chi(S_i),e) = \sum_{i=1}^k \lambda_i \chi(\delta^+(S_i))(e) = x(e),
$$
thus $x = \text{diff}^*(\pi)$.

Conversely, let $\pi \in \Pi$.
Define $\phi_0 = 0$, $S_{k} = \{v \in V \mid \pi(v) > \phi_{k-1}\}$, and $\phi_k = \min_{v \in S_k} \{\pi(v)\}$ for $k \geq 1$.
It is easy to verify that $\chi(S_k)$ is a potential function and that $S_{k+1} \subset S_k$, with $\phi_{k+1} < \phi_k$.
Define $S_m = \{v \in V \mid \pi(v) = 1\}$, leading to
$$
\pi = \phi_1 \chi(S_1) + (\phi_2 - \phi_1) \chi(S_2) + \ldots + (\phi_m - \phi_{m-1}) \chi(S_m).
$$
Thus, by Lemma \ref{Lem:cut-pot},
$$
\begin{aligned}
\text{diff}^*(\pi) &= \phi_1 \text{diff}^*(\chi(S_1)) + (\phi_2 - \phi_1) \text{diff}^*(\chi(S_2)) + \ldots + (\phi_m - \phi_{m-1}) \text{diff}^*(\chi(S_m)) \\
&= \phi_1 \chi(\delta^+(S_1)) + (\phi_2 - \phi_1) \chi(\delta^+(S_2)) + \ldots + (\phi_m - \phi_{m-1}) \chi(\delta^+(S_m)).
\end{aligned}
$$
Here, $\phi_1 + (\phi_2 - \phi_1) + \ldots + (\phi_m - \phi_{m-1}) = \phi_m = 1$.
Therefore, $\text{diff}^*(\pi) \in \text{conv}(\mathcal{C})$.
\end{proof}

The above lemma indicates a correspondence between potential functions and fractional minimum cuts.
The relationships between potential functions and sets $A, C, R$ are as follows:

\begin{lem}
The following facts hold:

\begin{enumerate}
\item $A = \{e \in E \mid \exists \pi \in \Pi, ~ \text{diff}(\pi,e) > 0\}$;
\item $C = \{e \in E \mid \forall \pi \in \Pi, ~ \text{diff}(\pi,e) = 0\}$;
\item $R = \{e \in E \mid \exists \pi \in \Pi, ~ \text{diff}(\pi,e) < 0\}$.
\end{enumerate}
\end{lem}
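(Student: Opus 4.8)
The plan is to read off, from the very definition of a potential function, the sign of $\text{diff}(\pi,e)$ on each edge, and then to supply explicit potentials that realize the strict signs. The first observation I would record is that the defining inequalities say exactly that $\text{diff}(\pi,e)\ge 0$ for every $\pi\in\Pi$ whenever $e\in A$, and $\text{diff}(\pi,e)\le 0$ for every $\pi\in\Pi$ whenever $e\in E\setminus A = C\cup R$. Consequently, if I set
$$
U_+ = \{e \mid \exists\,\pi\in\Pi,\ \text{diff}(\pi,e)>0\},\qquad U_0 = \{e \mid \forall\,\pi\in\Pi,\ \text{diff}(\pi,e)=0\},\qquad U_- = \{e \mid \exists\,\pi\in\Pi,\ \text{diff}(\pi,e)<0\},
$$
then no edge lies in two of these sets (an $A$-edge never attains a negative difference, an $(E\setminus A)$-edge never a positive one), and every edge lies in at least one; hence $(U_+,U_0,U_-)$ is a partition of $E$. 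Since $(A,C,R)$ is also a partition of $E$ by Definition \ref{Defn:edge}, it suffices to prove the three containments $A\subseteq U_+$, $C\subseteq U_0$, $R\subseteq U_-$: two partitions of the same set with nested parts must agree part by part.

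For $A\subseteq U_+$ I would argue as follows. Given $e=(u,v)\in A$, Lemma \ref{Lem:A-cut} furnishes a minimum cut $\delta^+(S)\in\mathcal{C}$ with $e\in\delta^+(S)$, so $u\in S$ and $v\in V\setminus S$. By Lemma \ref{Lem:cut-pot} the indicator $\chi(S)$ is a potential function, and $\text{diff}(\chi(S),e)=1-0=1>0$, so $e\in U_+$. The containment $R\subseteq U_-$ is symmetric: for $e=(u,v)\in R$, Corollary \ref{Cor:R-cut} provides a minimum cut $\delta^+(S)$ with $e\in\delta^-(S)$, whence $u\in V\setminus S$ and $v\in S$; then $\chi(S)\in\Pi$ and $\text{diff}(\chi(S),e)=0-1=-1<0$, so $e\in U_-$.

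The hard part will be $C\subseteq U_0$, since there I must control $\text{diff}(\pi,e)$ for \emph{every} potential function, not just the $0/1$-valued ones. My plan is to reduce an arbitrary $\pi\in\Pi$ to a nonnegative combination of indicator potentials and then use that $\text{diff}(\cdot,e)$ is linear in its first argument. Concretely, I would reuse the level-set decomposition from the preceding lemma, writing $\pi=\sum_k(\phi_k-\phi_{k-1})\chi(S_k)$ with $S_k=\{v\mid \pi(v)>\phi_{k-1}\}$, where each $\chi(S_k)$ is a potential function and hence, by Lemma \ref{Lem:cut-pot}, the indicator of a minimum cut set. Fixing $e=(u,v)\in C$, Theorem \ref{Thm:net-dec} gives $B\in\mathcal{B}$ with $\{u,v\}\subseteq B$, while Lemma \ref{Lem:C-cut} says every minimum cut set is a union of members of $\mathcal{B}$; therefore each $S_k$ either contains all of $B$ or is disjoint from $B$, forcing $\chi(S_k)(u)=\chi(S_k)(v)$ and $\text{diff}(\chi(S_k),e)=0$.

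Summing by linearity then yields $\text{diff}(\pi,e)=\sum_k(\phi_k-\phi_{k-1})\,\text{diff}(\chi(S_k),e)=0$, so $e\in U_0$. With the three containments in hand, the partition-matching observation from the first paragraph gives $A=U_+$, $C=U_0$, and $R=U_-$, which are precisely statements (1), (2), and (3). The only genuinely delicate point is the passage from indicator potentials to arbitrary potentials for the $C$-edges; everything else is a direct appeal to the structural results of Sections \ref{Sec:dec-cut} and \ref{Sec:dec-pot}.
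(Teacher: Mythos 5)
Your proposal is correct and follows the same route the paper intends: its one-line proof simply cites Lemma \ref{Lem:A-cut}, Lemma \ref{Lem:C-cut}, and Corollary \ref{Cor:R-cut}, and your argument uses exactly these, while actually supplying the details the paper leaves implicit --- namely Lemma \ref{Lem:cut-pot} to convert the minimum cuts into indicator potentials with strictly positive (resp.\ negative) difference, and the level-set decomposition of an arbitrary $\pi\in\Pi$ combined with Lemma \ref{Lem:C-cut} to handle the universal quantifier in statement (2). The partition-matching device that reduces three equalities to three containments is a clean organizational touch, and the step you flag as delicate (arbitrary potentials on $C$-edges) is precisely the one the paper's ``follows immediately'' glosses over.
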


\begin{proof}
The statements follow immediately from Lemma \ref{Lem:A-cut}, Corollary \ref{Lem:C-cut}, and Corollary \ref{Cor:R-cut}.
\end{proof}

From the second statement of this theorem, we obtain the following corollary:

\begin{cor}
Given $B \in \mathcal{B}$ and $\pi \in \Pi$, for any $u,v \in B$, we have $\pi(u) = \pi(v)$.
\end{cor}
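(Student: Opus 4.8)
The plan is to deduce the corollary directly from the characterization $C = \{e \in E \mid \forall \pi \in \Pi,\ \text{diff}(\pi,e) = 0\}$ established in the preceding lemma, by exploiting the strong connectivity of $D_f[B]$. Fix an arbitrary potential function $\pi \in \Pi$ and two vertices $u, v \in B$. Since $B$ is the vertex set of a strongly connected component of the residual network $D_f$ (Lemma \ref{Defn:B}), there is a directed path $u = w_0 \to w_1 \to \cdots \to w_k = v$ in $D_f[B]$; in particular every consecutive pair $w_i, w_{i+1}$ lies in $B$.

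The key step is to translate each residual arc $(w_i, w_{i+1}) \in E_f$ back into a genuine arc of $D$ lying in $C$. By the definition of the residual network, $(w_i, w_{i+1}) \in E_f$ means either $(w_i, w_{i+1}) \in E$ (a non-saturated forward arc) or $(w_{i+1}, w_i) \in E$ (a non-empty backward arc). In either case the underlying arc $e \in E$ has both endpoints in $B$, so $\alpha(e) \subseteq B$, and the second statement of Theorem \ref{Thm:net-dec} gives $e \in C$. Applying the characterization of $C$ from the preceding lemma, $\text{diff}(\pi, e) = 0$, which says exactly that the two endpoints of $e$, namely $w_i$ and $w_{i+1}$, receive the same $\pi$-value. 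Hence $\pi(w_i) = \pi(w_{i+1})$ for every $i$, and chaining these equalities along the path yields $\pi(u) = \pi(v)$.

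The only point requiring care is the identification in the previous paragraph: a residual arc is not literally an edge of $D$, and its orientation in $D$ may be reversed. This causes no difficulty here because membership in $C$ is phrased through the unordered endpoint set via $\alpha(e) \subseteq B$, so the conclusion $\text{diff}(\pi,e) = 0$, and therefore the equality of the two endpoint potentials, is insensitive to orientation. Since $\pi$ was arbitrary, the equality $\pi(u) = \pi(v)$ holds for all $\pi \in \Pi$ and all $u, v \in B$, as claimed.
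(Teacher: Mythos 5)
Your proof is correct and follows the route the paper intends: the paper derives this corollary from the characterization $C = \{e \in E \mid \forall \pi \in \Pi,\ \text{diff}(\pi,e) = 0\}$ without writing out the details, and you supply exactly the missing steps (strong connectivity of $D_f[B]$, translation of residual arcs to underlying edges with $\alpha(e) \subseteq B$, hence in $C$ by Theorem \ref{Thm:net-dec}, then chaining the equalities). Your closing remark that membership in $C$ is orientation-insensitive is the right point to flag, and the argument is complete.
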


Finally, we illustrate that potential functions and the corresponding fractional minimum cuts appear in pairs in the linear program (\ref{LP:re-min-cut}):

\begin{lem}
$(y, \pi) \in \mathbb{R}^{|E| \times |V|}$ is an optimal solution of linear program (\ref{LP:re-min-cut}) if and only if $\pi \in \Pi$, $y = \text{diff}^*(\pi)$.
\end{lem}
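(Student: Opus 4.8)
The plan is to read (\ref{LP:re-min-cut}) as the linear-programming dual of the maximum-flow program (\ref{LP:max-flow}), and to characterize its optimal face by combining complementary slackness with the preceding lemma (which identifies $\text{conv}(\mathcal{C})$ with $\{\text{diff}^*(\pi)\mid\pi\in\Pi\}$). Concretely, I would prove the two inclusions separately: that every $(\text{diff}^*(\pi),\pi)$ with $\pi\in\Pi$ is optimal, and that every optimal $(y,\pi)$ has this form. Throughout I would assume $c(e)>0$ for each edge, deleting zero-capacity edges first; this is the one genuinely delicate point, since when $c(e)=0$ the objective never sees $y(e)$ and optimality cannot force $y(e)=\text{diff}(\pi,e)$ on an essential edge, so the statement should be read under this harmless normalization.

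For the ``if'' direction, suppose $\pi\in\Pi$ and $y=\text{diff}^*(\pi)$. I would first verify feasibility: $\pi(s)=1$ and $\pi(t)=0$ are built into $\Pi$; for $e=(u,v)\in A$ the potential inequality $\pi(u)\ge\pi(v)$ gives $y(e)=\pi(u)-\pi(v)\ge 0$ and makes $\pi(v)-\pi(u)+y(e)=0$, while for $e\in E\setminus A$ we have $y(e)=0$ and $\pi(u)\le\pi(v)$, so both $y(e)\ge 0$ and $\pi(v)-\pi(u)+y(e)\ge 0$. For optimality I would evaluate the objective $\sum_e c(e)y(e)=\langle c,\text{diff}^*(\pi)\rangle$: by the preceding lemma $\text{diff}^*(\pi)\in\text{conv}(\mathcal{C})$, hence equals a convex combination $\sum_i\lambda_i\chi(\delta^+(S_i))$ of indicators of minimum cuts, and since each $c(\delta^+(S_i))$ equals the minimum-cut value $\mu$ and $\sum_i\lambda_i=1$, the objective equals $\mu$. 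By the Max-flow min-cut theorem and LP strong duality, $\mu$ is exactly the optimal value of (\ref{LP:re-min-cut}), so a feasible point attaining it is optimal.

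For the ``only if'' direction, let $(y,\pi)$ be optimal; feasibility already yields $\pi(s)=1$, $\pi(t)=0$, $y(e)\ge 0$, and $y(u,v)\ge\pi(u)-\pi(v)$ for every edge. The content is complementary slackness against maximum flows. For an essential edge $e=(u,v)\in A$, every maximum flow has $f(e)=c(e)>0$, so taking any optimal flow the positive primal variable $f(e)>0$ forces its dual constraint tight, i.e. $y(e)=\pi(u)-\pi(v)$; with $y(e)\ge 0$ this gives $\pi(u)\ge\pi(v)$ and $y(e)=\text{diff}^*(\pi)(e)$. For a dummy edge $e\in E\setminus A$, Definition \ref{Defn:edge} supplies a maximum flow $f_e$ with $f_e(e)<c(e)$; the slack capacity constraint forces the dual variable to vanish, $y(e)=0$, and then $0=y(e)\ge\pi(u)-\pi(v)$ gives $\pi(u)\le\pi(v)$ and $y(e)=0=\text{diff}^*(\pi)(e)$. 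Since these two cases exhaust $E$, I would conclude simultaneously that $\pi\in\Pi$ and $y=\text{diff}^*(\pi)$.

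I expect the feasibility checks and the objective-value bookkeeping to be routine; the substantive step is the necessity direction. The main obstacle is a bookkeeping subtlety rather than a conceptual one: complementary slackness must be applied against a \emph{different} maximum flow for each dummy edge (each chosen to leave that edge unsaturated), which is legitimate because complementary slackness holds between the fixed optimal dual $(y,\pi)$ and every optimal primal flow. I would therefore take care to state the two complementary-slackness implications in the correct direction with respect to the pairing of primal variables and constraints in (\ref{LP:max-flow})--(\ref{LP:re-min-cut}), and to flag the zero-capacity normalization noted above.
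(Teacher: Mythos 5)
Your proposal is correct and follows essentially the same route as the paper: for sufficiency, check feasibility and then identify the objective value with the max-flow value (the paper asserts this directly where you justify it via the $\text{conv}(\mathcal{C})=\{\text{diff}^*(\pi)\mid\pi\in\Pi\}$ lemma), and for necessity, apply complementary slackness against a maximum flow chosen per edge --- tightness of the dual constraint when $f(e)>0$ for essential edges, vanishing of $y(e)$ when some maximum flow leaves the edge unsaturated. Your zero-capacity caveat is a genuine refinement the paper glosses over; the only small item you leave unchecked (as does the paper, which merely asserts it) is that optimality also forces $\pi(v)\in[0,1]$ for every vertex, as required by the definition of $\Pi$.
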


\begin{proof}
Let $\pi \in \Pi$ and $y = \text{diff}^*(\pi)$, and let $e = (u,v)$ be an edge in $D$.
If $e \in A$, then $\pi(u) - \pi(v) - y(u,v) = 0$.
If $e \in E \setminus A$, then $y(u,v) = 0$, so $\pi(u) - \pi(v) - y(u,v) = 0$.
Since $\sum_{e \in E} c(e) y(e)$ is the value of the maximum flow, by the strong duality theorem, $(y, \pi)$ is the optimal solution of the linear program.

Conversely, let $(y, \pi)$ be the optimal solution of the linear program, and let $e = (u,v)$ be an edge in $D$.
By the complementary slackness condition, if $e \in A$, then $\pi(u) - \pi(v) = y(u,v) \geq 0$, which implies $\pi(u) \geq \pi(v)$.
If $e \in C$, then $\pi(u) - \pi(v) = y(u,v) = 0$.
If $e \in R$, then $\pi(u) - \pi(v) - y(u,v) = \pi(u) - \pi(v) \geq 0$, which implies $\pi(v) \geq \pi(u)$.
Furthermore, for any $v \in V$, we have $1 = \pi(s) \geq \pi(v) \geq \pi(t) = 0$.
Therefore, $\pi \in \Pi$ and $y = \text{diff}^*(\pi)$.
\end{proof}

\bibliographystyle{amsalpha} 
\bibliography{reference} 

\Addresses

\end{document}